\documentclass[a4paper,reqno,11pt]{amsart}
\usepackage{amsfonts}
\textwidth14cm
\textheight22cm
\flushbottom 
\usepackage{caption}
\captionsetup{tableposition=top,figureposition=bottom,font=small}
\usepackage{color}
\usepackage{hyperref}
\definecolor{ddorange}{rgb}{1,0.5,0}
\definecolor{ddcyan}{rgb}{0,0.2,1.0}

\newcommand{\EEE}{\color{black}}

\usepackage[normalem]{ulem}

\usepackage[left=3.5cm,right=3.5cm,top=3.5cm,bottom=3cm,headsep=0.7cm]{geometry}
\usepackage[T1]{fontenc} 
\usepackage[utf8]{inputenc}
\usepackage[french, italian, english]{babel}
\usepackage{amsmath,amssymb,amsthm,mathrsfs,esint}
\usepackage{mathtools}

%

\newcommand{\C}{{\mathbb C}}
\newcommand{\N}{{\mathbb N}}

\newcommand{\R}{{\mathbb R}}
\newcommand{\RR}{{\mathbb R}}
\newcommand{\Rn}{{\R}^n}

\renewcommand{\L}{{\mathcal{L}}}
\newcommand{\EE}{{\mathrm{E}}}
\renewcommand{\d}{\mathrm{d}}
\newcommand{\M}{\mathcal{M}}
\renewcommand{\H}{\mathcal{H}}
\renewcommand{\d}{\,\mathrm{d}}
\newcommand{\x}{{\times}}
\newcommand{\de}{{\partial}}
\newcommand{\sm}{\setminus}
\renewcommand{\Cap}{\mathrm{Cap}}
\renewcommand{\tilde}{\widetilde}
\renewcommand{\epsilon}{\varepsilon}


\newcommand{\Mb}{{\M_b}}
\newcommand{\Mnn}{{\mathbb{M}^{n\times n}_{sym}}}
\newcommand{\Sym}{{\mathbb{M}^{n\times n}_{sym}}}
\newcommand{\MD}{{\mathbb M}^{n{\times}n}_D}
\newcommand{\hn}{{\mathcal H}^{n-1}}

\newcommand{\HH}{{\mathcal H}} 
\newcommand{\MbD}{{\M_b(\Omega \cup \partial_D \Omega; \MD)}}
\newcommand{\Lnn}{{L^2(\Omega; \Mnn)}}
\newcommand{\wto}{\rightharpoonup}

\newcommand{\dom}{\partial \Omega}
\newcommand{\dod}{\partial_D \Omega}
\newcommand{\don}{\partial_N \Omega}
\newcommand{\ol}{\overline}

\newcommand{\wtos}{\mathrel{\mathop{\rightharpoonup}\limits^*}}


\newcommand{\xiy}{{^\xi_y}}

\newcommand{\dhn}{\,\mathrm{d}{\mathcal H}^{n-1}}

\newcommand{\MnnD}{{\mathbb{M}^{n{\times} n}_{D}}}

\newcommand{\Hu}{{W^{1,\gamma}(\Omega)}}

\newcommand{\tr}{\mathop\mathrm{tr}\,}

\newcommand{\mres}{\mathbin{\vrule height 1.6ex depth 0pt width 0.13ex\vrule height 0.13ex depth 0pt width 1.3ex}}

\theoremstyle{plain}
\begingroup
\theoremstyle{plain}
\newtheorem{theorem}{Theorem}[section]

\newtheorem{proposition}[theorem]{Proposition}
\newtheorem{lemma}[theorem]{Lemma}
\theoremstyle{definition}

\theoremstyle{remark}
\newtheorem{remark}[theorem]{Remark}

\endgroup

\makeatletter
\newcommand{\myitem}[1][]{
  \protected@edef\@currentlabel{#1}%
\item[#1]
}
\makeatother

\numberwithin{equation}{section}
%
%

%
\title[A lower semicontinuity result for plasticity coupled with damage]{
A  lower semicontinuity result  for linearised elasto-plasticity  coupled with damage in $W^{1,\gamma}$, $\gamma>1$}
\author{Vito Crismale}
\address{CMAP, \'Ecole Polytechnique, UMR CNRS 7641, 91128 Palaiseau Cedex, France}
\email[Vito Crismale]{vito.crismale@polytechnique.edu}

\author{Gianluca Orlando}
\address{TU M\"unchen, Zentrum Mathematik - M7, Boltzmannstrasse 3, 85747 Garching}
\email[Gianluca Orlando]{orlando@ma.tum.de}

\begin{document}
\begin{abstract}
We prove the lower semicontinuity of functionals of the form
\begin{equation*}
    \int \limits_\Omega \! V(\alpha) \d |\EE u| \, , 
\end{equation*}
with respect to the weak converge of $\alpha$ in $W^{1,\gamma}(\Omega)$, $\gamma > 1$, and the weak* convergence of $u$ in $BD(\Omega)$, where $\Omega \subset \RR^n$. These functional arise in the variational modelling of linearised elasto-plasticity coupled with damage and their lower semicontinuity is crucial in the proof of existence of quasi-static evolutions. This is the first result achieved for subcritical exponents $\gamma < n$.
\end{abstract}
\maketitle

{\small
\keywords{\textbf{Keywords:} 
lower semicontinuity, elasto-plasticity, damage, functions of bounded deformation

\bigskip
\subjclass{\textbf{MSC 2010:} 49J45, 26A45, 74C05, 74G65
}}
\setcounter{tocdepth}{1}
\tableofcontents

\section{Introduction}



Plasticity and damage play a fundamental role in material modelling for the phenomenological description of the inelastic behaviour of solids in response to applied forces. The former accounts for permanent residual deformations that persist after complete unloading and originates from the movement and the accumulation of dislocations at the microscale; the latter affects the elastic response of the material and is the result of formation of microcracks and microvoids. 

%

The coupling between plasticity and damage goes far beyond the mere theoretical interest and in fact turns out to be an effective and flexible tool that allows for the modelling of a whole spectrum of failure phenomena such as nucleation of cracks, cohesive fracture~\cite{AleMarVid14}, and fatigue under cyclic loading (see \cite[Section~3.6]{Ibr09} or \cite[Section~7.5]{Lem90}). 
These models have also attracted the attention of the mathematical community, and many recent contributions have been brought to the study of evolutionary models featuring coupling between plasticity and damage. In the quasi-static setting we mention~\cite{Cri16, CriLaz16, CriOrl18} for the case of perfect plasticity and~\cite{Cri17} for a strain-gradient plasticity model; the case of hardening for plasticity is treated in~\cite{BonRocRosTho16, RosTho17, RouVal17}, while in~\cite{RouVal16} the possible presence of damage healing is taken into account. We additionally refer to~\cite{MelScaZem19} for the study of finite-strain plasticity with damage, to~\cite{DavRouSte19} for perfect plasticity in viscoelastic solids in the dynamical setting, and to \cite{Ros19} for thermo-viscoplasticity. 

The mathematical analysis on  these  models  is  not only restricted to the proof of existence of evolutions. Motivated by the discussions in~\cite{AleMarVid14}, in~\cite{Cri16, CriLaz16} it is pointed out how the interplay between plasticity and damage leads to a mathematical formulation of the {\em fatigue} phenomenon, crucial in the description of the material behaviour under cycling loading (see also~\cite{AleCriOrl19} for fatigue in a variational model without plasticity). 
In the static setting, the strict relation between damage models with plasticity and {\em cohesive fracture} models is shown in~\cite{DMOrlToa16} through a phase-field $\Gamma$-convergence analysis in the spirit of Ambrosio-Tortorelli~\cite{AmbTor90,Iur14,ChaCri17} (cf.\ also \cite{AliBraSha99, ConFocIur16} for other phase-field approximations of cohesive energies). The previous considerations and the model presented in~\cite{AbdMarWel09} have  led  in~\cite{CriLazOrl18} to the analysis of a quasi-static evolution for a cohesive fracture model with fatigue (we also refer to~\cite{DMZan07, CagToa11, ArtCagForSol18, NegSca17} for different cohesive fracture models).


In this paper we are concerned with a lower semicontinuity problem that arises in the variational modelling of small-strain plasticity coupled with damage. In order to present the main result in this paper, we introduce some notation for damage model coupled with plasticity.


For all the details about the mathematical formulation of small-strain plasticity, we refer to~\cite{DMDSMor06}. Here we recall that the \emph{linearised strain} $\mathrm{E}u$, that is the symmetrised (spatial) gradient of the \emph{displacement} $u \colon \Omega \to \R^n$, is decomposed as the sum $\mathrm{E} u = e + p$. The {\em elastic strain} $e$ is the only term which counts for the stored elastic energy and belongs to $L^2(\Omega;\Mnn)$; the {\em plastic strain} $p$ is the variable responsible for the plastic dissipation, it describes the deformations permanent after the unloading phase, and belongs to the space $\M_b(\Omega;\MnnD)$ of bounded Radon measure with values in the space of trace-free symmetric matrices $\MnnD$. The plastic dissipation can be described according to the theory of rate-independent systems~\cite{Mie05} in terms of the so-called {\em plastic dissipation potential}, a prototypical example being given in the Von~Mises theory by 
\begin{equation} \label{eq:Von Mises}
    V \int \limits_\Omega \!  \d |p| \, ,
\end{equation}
where $V$ is a material constant and $|p|$ denotes the total variation of the measure $p$ with respect to the Euclidean (or Frobenius) norm on matrices. The constant $V$ in~\eqref{eq:Von Mises} is the radius of the ball where the trace-free part of the stress is constrained to lie during the evolution. (This {\em constraint set}, whose boundary is referred to as the {\em yield surface}, is in more general models a convex compact set in the space of trace-free symmetric matrices.)  


In presence of damage, the constraint set additionally depends on the {\em damage variable} $\alpha \colon \Omega \to [0,1]$ and the plastic dissipation potential becomes accordingly 
\begin{equation} \label{eq:damage von Mises}
    \H(\alpha, p) := \int \limits_\Omega \! V(\alpha(x)) \d |p|(x) \, , 
\end{equation}
where $V \colon [0,1] \to [m,M]$ is a continuous and nondecreasing function with $m >0$. The dependence of $V$ on $\alpha$ is one of the peculiar features of these coupled models. In gradient damage models~\cite{PhaMar10-I, PhaMar10-II, KneRosZan13}, a gradient term in the energy of the type 
\begin{equation*}
    \int \limits_\Omega \! |\nabla \alpha|^\gamma \d x \, , \quad \gamma > 1 \, ,
\end{equation*}
provides, for configurations with finite energy, a control on $\alpha$ in $W^{1,\gamma}(\Omega)$. We remark that the functional $\H$ in \eqref{eq:damage von Mises} is well defined for $\alpha \in W^{1,\gamma}(\Omega)$, $\gamma>1$, and for $p = \mathrm{E} u - e$ with $u \in BD(\Omega)$ and $e \in L^2(\Omega;\Mnn)$.   
Indeed, any $\alpha \in W^{1,\gamma}(\Omega)$ admits a precise representative~$\tilde \alpha$ defined (and uniquely determined) up to a set of $\gamma$-capacity zero, which has in particular $\H^{n-1}$-measure zero and thus it is $|p|$-negligible. For more details we refer to see Section~\ref{Sec1}.

%

In this work we study the lower semicontinuity of the dissipation potential in~\eqref{eq:damage von Mises}. Before explaining in detail our result, we present some recent developments related to this problem.


The case $\gamma>n$ has been studied in~\cite{Cri16} under very general assumptions on the plastic dissipation potential. There it is proven that functionals of the form
\begin{equation*}
    \int\limits_\Omega H\Big(\alpha(x), \frac{\d p}{\d |p|}(x)\Big) \d |p|(x) \, ,
\end{equation*}
with 
\begin{equation*}
    \text{$H$ convex, continuous, and positively one-homogeneous in the second variable}     
\end{equation*}
are lower semicontinuous with respect to the weak convergence of $\alpha$ in $W^{1,\gamma}(\Omega)$ and the weak* convergence of $p$ in $\Mb(\Omega;\Mnn)$. The proof follows from Reshetnyak's semicontinuity theorem after observing that $W^{1,\gamma}(\Omega)$ is compactly embedded in $C(\overline{\Omega})$ for $\gamma > n$. This result is the starting point for the proof of the existence of quasi-static evolutions~\cite[Theorem~4.3]{Cri16}. Unfortunately, for $n \geq 2$ the condition $\gamma > n$ precludes the application of the existence result to the case where $\alpha$ belongs to the Hilbert space $H^1(\Omega)$, often preferred in the mechanical community~\cite{PhaMar10-I, PhaMar10-II, AleMarVid15, MieHofSchAld15, AmbKruDL16, MieAldRai16, AleAmbGerDL18}.


The lower semicontinuity result has been generalised in \cite{CriOrl18} to the critical case $\gamma = n$ for plastic dissipation potentials of the type 
\begin{equation*}
    \int\limits_\Omega V(\alpha(x)) H\Big(\frac{\d p}{\d |p|}(x)\Big) \d |p|(x) \, ,
\end{equation*}
with $H$ convex and positively one-homogeneous. In spite of the failure of the compact embedding of $W^{1,n}(\Omega)$ in $C(\overline{\Omega})$, the lower semicontinuity result still holds true. The proof in~\cite{CriOrl18} is based on a concentration-compactness argument in the spirit of \cite{Lio85}, that permits to identify the dimension of the support of limits of the measures $\alpha_k \EE u_k$ for $\alpha_k$ converging weakly in $W^{1,n}(\Omega)$ and $u_k$ converging weakly* in $BD(\Omega)$. However, the technique in~\cite{CriOrl18} does not apply to the case $\gamma < n$, as shown in~\cite[Example~3.1]{CriOrl18}.


In the present work we prove a lower semicontinuity result that applies for every~$\gamma > 1$ in the special case where the plastic dissipation potential is given by~\eqref{eq:damage von Mises}, i.e., when $H$ is given by the Euclidean (or Frobenius) norm,  
assuming $V$ lower semicontinuous. We assume~$\Omega$ bounded, which is usually the case in the applications to Mechanics. The result can be generalized to the case of unbounded open sets with minor modifications.  

\begin{theorem}\label{thm:lsc with Eu}
    Let $\Omega$ be an open bounded subset of $\Rn$, 
 let $V \colon \R \to [0, +\infty]$ 
 be lower semicontinuous,  
    let $\gamma > 1$, and let $\HH$ be the functional defined in~\eqref{eq:damage von Mises}. Assume that $\alpha_k \wto \alpha$ in $W^{1,\gamma}(\Omega)$ and $u_k \wtos u$ in $BD(\Omega)$. Then
    \begin{equation}\label{2608161025}
     \HH(\alpha,\mathrm{E} u) \leq \liminf_{k\to\infty} \HH (\alpha_k,\mathrm{E}u_k) \, .
    \end{equation}
\end{theorem}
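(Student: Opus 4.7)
My plan is to prove \eqref{2608161025} by reducing it, via two monotone approximations, to a super-level-set inequality and then tackling that inequality with a capacitary argument exploiting the quasi-continuity of $W^{1,\gamma}$ representatives. The decisive fact enabling the subcritical range $\gamma > 1$ is that $\Cap_\gamma$-null sets are $\hn$-null (Meyers' theorem) and hence $|\mathrm{E}u|$-null for every $u \in BD(\Omega)$.

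\emph{Two reductions.} Since $V$ is LSC, write $V = \sup_j V_j$ with $V_j$ nonnegative, continuous and bounded, $V_j \uparrow V$. Monotone convergence in $\H(\alpha, \mathrm{E}u) = \sup_j \int_\Omega V_j(\tilde\alpha) \d|\mathrm{E}u|$ combined with the bound $V_j \le V$ on the right-hand side reduce \eqref{2608161025} to the case of continuous bounded $V$. For such $V$, Cavalieri gives
\begin{equation*}
\int\limits_\Omega V(\tilde\alpha) \d|\mathrm{E}u| = \int_0^\infty |\mathrm{E}u|(\{V\circ\tilde\alpha > s\}) \d s,
\end{equation*}
and Fatou's lemma further reduces \eqref{2608161025} to the pointwise super-level inequality
\begin{equation*}
|\mathrm{E}u|(\{V\circ\tilde\alpha > s\}) \le \liminf_{k\to\infty} |\mathrm{E}u_k|(\{V\circ\tilde\alpha_k > s\}) \qquad \text{for a.e.\ } s \in \R.
\end{equation*}

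\emph{Capacitary core.} Pass to a subsequence so that $\alpha_k \to \alpha$ $\gamma$-quasi-everywhere on $\Omega$ and $|\mathrm{E}u_k| \wtos \mu$ weakly* in $\M_b(\ol\Omega)$ with $\mu \ge |\mathrm{E}u|$. A capacitary Egorov theorem then provides, for every $\epsilon > 0$, a decreasing family of open sets $W_\epsilon \subset \Omega$ with $\Cap_\gamma(W_\epsilon) < \epsilon$ on whose complement $\tilde\alpha_k \to \tilde\alpha$ uniformly; since $\Cap_\gamma$-null implies $\hn$-null, $|\mathrm{E}u|(W_\epsilon) \to 0$. On $\Omega \setminus W_\epsilon$ the composition $V \circ \tilde\alpha$ is continuous, so for $\delta > 0$ the set $\{V\circ\tilde\alpha > s + \delta\} \setminus W_\epsilon$ is relatively open there and coincides with $O_\epsilon \setminus W_\epsilon$ for some open $O_\epsilon \subset \Omega$; uniform convergence plus continuity of $V$ yield $O_\epsilon \setminus W_\epsilon \subset \{V\circ\tilde\alpha_k > s\}$ for $k$ large. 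Combining the weak* LSC $|\mathrm{E}u|(O_\epsilon) \le \liminf_k |\mathrm{E}u_k|(O_\epsilon)$ with the pointwise bound $|\mathrm{E}u_k|(O_\epsilon) \le |\mathrm{E}u_k|(\{V\circ\tilde\alpha_k > s\}) + |\mathrm{E}u_k|(W_\epsilon)$, and then sending $\epsilon \to 0$ followed by $\delta \to 0^+$ along generic $s$ with $|\mathrm{E}u|(\{V\circ\tilde\alpha = s\}) = 0$, should yield the super-level inequality.

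\emph{Main obstacle.} The delicate step is controlling the concentration error $\limsup_k |\mathrm{E}u_k|(W_\epsilon)$, which need not vanish with $\epsilon$: the measures $|\mathrm{E}u_k|$ can concentrate on $\hn$-null sets — precisely where $\alpha_k$ may fail to converge uniformly. The way through is to enlarge $W_\epsilon$, accepting uniform convergence on a smaller (but still sizeable) complement, so that also $\mu(\ol{W_\epsilon}) \to 0$, whence $\limsup_k |\mathrm{E}u_k|(W_\epsilon) \le \mu(\ol{W_\epsilon}) \to 0$. Making such an enlargement compatible with quasi-continuity of $\alpha$ while handling the possibly singular part of $\mu$ is the heart of the subcritical case $\gamma < n$; it leverages the scalar Frobenius structure of \eqref{eq:damage von Mises}, which is presumably why extending the method to a general convex positively $1$-homogeneous integrand $H$ appears out of reach.
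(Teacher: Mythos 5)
Your two reductions (to continuous bounded $V$ via monotone approximation, then to the super-level-set inequality via Cavalieri and Fatou) are sound, and the ``Main obstacle'' paragraph correctly diagnoses why the $\gamma<n$ case is hard. But the proposed remedy is exactly the step that is missing, and I do not think it can be completed along the lines you indicate. A capacitary Egorov theorem controls the bad set $W_\epsilon$ only in $\Cap_\gamma$-measure (hence, for $\gamma>1$, in $\hn$-measure), whereas the weak* limit $\mu$ of $|\mathrm{E}u_k|$ need not vanish on $\hn$-null sets: the excess $\mu - |\mathrm{E}u|$ can be concentrated on an $\hn$-null set, which is precisely a set where the quasicontinuous representatives $\tilde\alpha_k$ can oscillate wildly and where no enlargement of the good set can restore uniform convergence. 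You write ``enlarge $W_\epsilon$ \dots\ so that also $\mu(\ol{W_\epsilon})\to 0$,'' but the bad set for uniform convergence and the concentration set of $\mu$ are a priori the same set, and making $W_\epsilon$ larger only increases $\mu(\ol{W_\epsilon})$. In short, there is no tool available to guarantee uniform convergence of $\tilde\alpha_k$ off a $\mu$-small set, and this is not a detail to be filled in later but the entire content of the theorem.

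The paper avoids the ambient-dimension concentration problem altogether by slicing. On one-dimensional slices, $W^{1,\gamma}$ embeds compactly into $C^0$ for every $\gamma>1$, so after controlling the sliced energies via Fatou one gets, for $\hn$-a.e.\ line, uniform convergence of $(\tilde\alpha_{k})^\xi_y$ together with weak* convergence of $(\widehat u_{k})^\xi_y$ in $BV$; Reshetnyak's theorem then yields lower semicontinuity of $\int V(\tilde\alpha)\,\d|\mathrm{E}u\,\xi\cdot\xi|$ for each direction $\xi$. The Frobenius structure is then exploited not through a refinement of the Egorov set but through the algebraic identity $|A|=\sup_{(\xi^1,\dots,\xi^n)}(\sum_i|A\xi^i\cdot\xi^i|^2)^{1/2}$ (Proposition~\ref{2608161807}), and a De Giorgi--Letta localization lemma (Lemma~\ref{2608161816}) reassembles the directional lower bounds into the full estimate $\int_K V(\tilde\alpha)\,\d|\mathrm{E}u|\le\Lambda(A)$. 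So where you gesture toward ``leveraging the scalar Frobenius structure,'' the concrete mechanism is the passage to slices, where quasicontinuity becomes genuine continuity and the concentration phenomenon disappears; your capacitary route never reaches that regime.
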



In Theorem~\ref{thm:main} below we show how Theorem~\ref{thm:lsc with Eu} implies the lower semicontinuity of~$\HH$ with respect to the weak convergence of $\alpha_k$ in $W^{1,\gamma}(\Omega)$ and the weak* convergence of~$p_k$ under the additional assumptions that~$\EE u_k = e_k + p_k$,  $u_k$ converge weakly$^*$ in~$BD(\Omega)$, and~$e_k$ converge \emph{strongly} in $\Lnn$. This lower semicontinuity result would suffice to prove the existence of quasi-static evolutions for the gradient damage models coupled with small-strain plasticity, provided one knows \emph{a priori} that  the  elastic strains $e_k$ corresponding to the discrete-time approximations of the evolution converge \emph{strongly} in $\Lnn$ (see Remark~\ref{rem:2504190838}). Obtaining such an \emph{a priori} strong convergence is possible in the case of perfect plasticity without damage~\cite{Dem09}, but unfortunately it seems a  task out of reach in the presence of damage.


The proof of Theorem~\ref{thm:lsc with Eu} is based on a slicing and localisation argument first introduced in~\cite{DMOrlToa17}. This relies on the following formula for the Euclidean norm of a symmetric $n {\times} n$ matrix $A$: 
\begin{equation*}
    |A|^2 = \sup_{(\xi^1, \dots, \xi^n)} \sum_{i=1}^n |A \xi^i \cdot \xi^i|^2 ,
\end{equation*}
where the supremum is taken over all orthonormal bases $(\xi^1,\dots,\xi^n)$ of $\RR^n$. We stress that one could conclude the semicontinuity of $\H$ only knowing the convergence (even weak) of~$e_k$ along almost any slice. Unfortunately, this is not guaranteed if~$e_k$ converge only weakly in~$\Lnn$ and this is the reason why the assumption strong convergence of~$e_k$ is needed for our proof.

\section{Notation and preliminaries}\label{Sec1}

\subsection*{Notation}
Throughout the paper we assume that $n \geq 2$. The Lebesgue measure in $\RR^n$ is denoted by $\mathcal{L}^n$, while $\H^s$ is the $s$-dimensional Hausdorff measure.

The space of $n \x n$ symmetric matrices is denoted by $\Sym$; it is endowed with the euclidean scalar product $A\!:\!B:= \tr(AB^T)$, and the corresponding euclidean norm $|A| := (A \!:\! A)^{1/2}$. The symmetrised tensor product $a \odot b$ of two vectors $a, b \in \RR^n$ is the symmetric matrix with components $(a_i b_j + a_j b_i)/2$.

\subsection*{Measures}
Let $\Omega$ be an open set in $\RR^n$. The space of bounded $\RR^m$-valued Radon measures is denoted by $\M_b(\Omega; \RR^m)$. This space can be regarded as the dual of the space $C_0(\Omega; \RR^m)$ of $\RR^m$-valued continuous functions on $\ol \Omega$ vanishing on $\de \Omega$. The notion of weak* convergence in $\M_b(\Omega;\RR^m)$ refers to this duality. Moreover, we denote by $\M^+_b(\Omega)$ the space of non-negative bounded Radon measures. If $f \in L^1(\Omega; \RR^m)$, we shall always identify the bounded Radon measure $f\mathcal{L}^n$ with the function $f$.

Let us consider a lower semicontinuous function $H \colon \Omega \x \RR^m \to [0,+\infty]$, positively $1$-homogeneous and convex in the second variable and let us consider the functional defined in accordance to the theory of convex functions of measures
\[
\int\limits_\Omega H\Big(x, \frac{\d \mu}{\d |\mu|}(x)\Big) \d |\mu|(x) \, , \quad \text{for } \mu \in \M_b(\Omega; \RR^m) \, ,
\]
where ${\d \mu}/{\d |\mu|}$ is the Radon-Nikodym derivative of $\mu$ with respect to its total variation $|\mu|$. 


We recall the classical Reshetnyak's Lower Semicontinuity Theorem \cite{Res}. For a proof we refer to \cite[Theorem 2.38]{AmbFusPal}.

\begin{theorem}[Reshetnyak's Lower Semicontinuity Theorem]\label{teo:Res}
Let $\Omega$ be an open subset of $\RR^n$. Let $\mu_k, \mu \in \M_b(\Omega; \RR^m)$. If $\mu_k \wtos \mu$ weakly* in $\M_b(\Omega; \RR^m)$, then
\begin{equation*}
\int\limits_\Omega  H\Big(x,\frac{\d \mu}{\d |\mu|}(x)\Big) \d |\mu|(x) \leq \liminf_{k \to +\infty}\int\limits_\Omega  H\Big(x,\frac{\d \mu_k}{\d |\mu_k|}(x)\Big) \d |\mu_k|(x) \, ,
\end{equation*}
for every lower semicontinuous function $H \colon \Omega \x \RR^m \to [0,+\infty]$, positively $1$-homogeneous and convex in the second variable.
\end{theorem}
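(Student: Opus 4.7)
\medskip
\noindent\textbf{Plan.} The proof, following the slicing and localisation strategy of~\cite{DMOrlToa17}, combines one-dimensional $BD$-slicing with the Frobenius-norm identity
\[
|A|^2=\sup_{(\xi^1,\ldots,\xi^n)}\sum_{i=1}^n(A\xi^i\cdot\xi^i)^2,\qquad A\in\Sym,
\]
where the supremum is attained on the eigenbasis of~$A$. A preliminary reduction replaces $V$ by an increasing sequence of bounded Lipschitz functions $V_j\nearrow V$ (e.g.\ $V_j(s):=\inf_{r\in\R}\{(V(r)\wedge j)+j|s-r|\}$); monotone convergence reduces (\ref{2608161025}) to the case of $V$ continuous and bounded. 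After passing to a subsequence realising the liminf, the compact embedding $BD(\Omega)\hookrightarrow L^1(\Omega;\R^n)$ and Rellich--Kondrachov yield $u_k\to u$ in $L^1$ and $\alpha_k\to\alpha$ a.e.\ on~$\Omega$ (along a further subsequence).

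\medskip
\noindent\emph{Step 1: Directional LSC via 1D slicing.} Fix $\xi\in S^{n-1}$. The $BD$-slicing theorem provides, for $\hn$-a.e.\ $y\in\xi^\perp$, the 1D trace $w^\xi_{k,y}(t):=u_k(y+t\xi)\cdot\xi\in BV(\Omega^\xi_y)$ with the disintegration
\[
\int_\Omega\!\phi\d|\langle\xi,\mathrm{E}u_k\xi\rangle|=\int_{\xi^\perp}\!\!\int_{\Omega^\xi_y}\!\phi(y+t\xi)\d|Dw^\xi_{k,y}|\d\hn(y)
\]
for all non-negative Borel $\phi$. Fubini transfers the $L^1$ and a.e.\ convergences to a.e.\ slice, and a slicewise diagonal extraction (legitimate since we are proving a liminf inequality) provides, on $\hn$-a.e.\ slice, both $w^\xi_{k,y}\wtos w^\xi_y$ weakly$^*$ in $BV(\Omega^\xi_y)$ and uniform convergence $\tilde\alpha^\xi_{k,y}\to\tilde\alpha^\xi_y$ on $\overline{\Omega^\xi_y}$, the latter via the compact embedding $W^{1,\gamma}(I)\hookrightarrow C(\overline I)$ in dimension one (valid for every $\gamma>1$). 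Continuity of~$V$ promotes this to $V(\tilde\alpha^\xi_{k,y})\to V(\tilde\alpha^\xi_y)$ uniformly on the slice, so Reshetnyak's Theorem~\ref{teo:Res} applied on $\Omega^\xi_y$ to the 1D measures $Dw^\xi_{k,y}$ with weight $\psi(y+\cdot\,\xi)V(\tilde\alpha^\xi_{k,y})$, followed by Fatou in~$y$ and the disintegration above, gives, for every non-negative $\psi\in C_c(\Omega)$,
\[
\int_\Omega\!\psi V(\tilde\alpha)\d|\langle\xi,\mathrm{E}u\xi\rangle|\leq\liminf_{k\to\infty}\int_\Omega\!\psi V(\tilde\alpha_k)\d|\langle\xi,\mathrm{E}u_k\xi\rangle|.
\]

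\medskip
\noindent\emph{Step 2: Assembly via Luzin and the Frobenius identity.} Let $\nu:=\d\mathrm{E}u/\d|\mathrm{E}u|\in\Sym$ with $|\nu|=1$ $|\mathrm{E}u|$-a.e. By Luzin, for any $\eta>0$ there is a compact $K\subset\Omega$ with $|\mathrm{E}u|(\Omega\sm K)<\eta$ on which $\nu$ (and, modulo handling of spectral degeneracies, a measurable selection of its eigenbasis) is continuous. Partition $K$ into finitely many Borel pieces $K_l$ of small diameter with $|\mathrm{E}u|(\de K_l)=0$, on each of which the eigenbasis of~$\nu$ is within~$\varepsilon$ of a fixed ORB $(\xi^{l,1},\ldots,\xi^{l,n})$. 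The pointwise identity $|\nu|^2=\sum_i(\nu\xi^i\cdot\xi^i)^2$ in the eigenbasis, combined with perturbation, yields on each $K_l$
\[
|\mathrm{E}u|\mres K_l\leq(1+\omega(\varepsilon))\left|\bigl(\langle\xi^{l,i},\mathrm{E}u\xi^{l,i}\rangle\bigr)_{i=1}^{n}\right|_{\ell^2}\mres K_l,\qquad \omega(\varepsilon)\to 0,
\]
while for every $k$ one has $\bigl|(\langle\xi^{l,i},\mathrm{E}u_k\xi^{l,i}\rangle)_i\bigr|_{\ell^2}\leq|\mathrm{E}u_k|$ (as $\sum_i(\nu_k\xi^i\cdot\xi^i)^2\leq|\nu_k|^2$). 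Approximating $\chi_{K_l}$ from below by $\psi^m_l\nearrow\chi_{K_l}$ in $C_c(\Omega;[0,1])$, applying to each pair $(\xi^{l,i},\psi^m_l)$ a vector-valued extension of Step~1 (obtained by sliding the slicing argument through the $\ell^2$-total variation of the vector measure $(\langle\xi^{l,i},\mathrm{E}u\xi^{l,i}\rangle)_i$, whose components remain $BV$ on lines parallel to each~$\xi^{l,i}$), summing over $l$ and letting in turn $m\to\infty$, $\varepsilon,\eta\to 0$ and $j\to\infty$ yields (\ref{2608161025}).

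\medskip
\noindent The main obstacle is precisely Step~2: matching the scalar directional LSC of Step~1 with the $\ell^2$-form required by the Frobenius identity without losing sharpness. A naive summation $\sum_i|\langle\xi^i,\mathrm{E}u_k\xi^i\rangle|\leq\sqrt n\,|\mathrm{E}u_k|$ would introduce an unacceptable factor $\sqrt n$; to avoid it one must establish a vector-measure version of Step~1 for the $\ell^2$-total variation of $(\langle\xi^{l,i},\mathrm{E}u\xi^{l,i}\rangle)_i$, which is where the slicing has to be arranged so as to preserve the $\ell^2$ structure. Combined with the measurable selection of the eigenbasis of~$\nu$ (delicate near spectral degeneracies) and the construction of a partition $\{K_l\}$ with $|\mathrm{E}u|$-negligible boundaries, this is the technical core of the argument.
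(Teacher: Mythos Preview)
Your proposal does not address the stated theorem. Theorem~\ref{teo:Res} is Reshetnyak's classical lower semicontinuity theorem for convex one-homogeneous integrands of measures; the paper does not prove it but simply cites \cite[Theorem~2.38]{AmbFusPal}. What you have written is instead an outline of a proof of Theorem~\ref{thm:lsc with Eu} (the paper's main result on $\HH(\alpha,\mathrm{E}u)$): you invoke $BD$-slicing, the Frobenius identity for symmetric matrices, sequences $u_k\in BD(\Omega)$ and $\alpha_k\in W^{1,\gamma}(\Omega)$, and you even refer to inequality~\eqref{2608161025}. None of these objects appear in the statement of Theorem~\ref{teo:Res}, which concerns arbitrary $\R^m$-valued measures $\mu_k$ and makes no reference to $BD$, to $\alpha$, or to the symmetric-matrix structure.

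If one reads your text as an attempted proof of Theorem~\ref{thm:lsc with Eu}, then your Step~1 is essentially the content of Proposition~\ref{2608161903}, but your Step~2 is both different from and less complete than the paper's argument. The paper avoids any measurable selection of the eigenbasis of $\nu=\d\mathrm{E}u/\d|\mathrm{E}u|$ and any Luzin-type partition: instead it fixes a countable dense family of rotations $R_j\in O(n)$, proves via H\"older (inequality~\eqref{2608162344}) that $(\sum_i\mathcal{F}_{\xi^i_j}(\alpha,u;A)^2)^{1/2}\le\int_A V(\tilde\alpha)\d|\mathrm{E}u|$ for every orthonormal basis, and then applies the localisation Lemma~\ref{2608161816} to the functions $|\varphi_j|$ to recover $\sup_j|\varphi_j|=V(\tilde\alpha)$ pointwise via Proposition~\ref{2608161807}. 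This sidesteps precisely the difficulty you flag at the end of your Step~2 --- the need for a ``vector-measure version of Step~1 for the $\ell^2$-total variation'' and the handling of spectral degeneracies --- which you yourself identify as the unresolved technical core of your approach.
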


\subsection*{$BV$ and $BD$ functions} 
Let $\Omega$ be an open set in $\RR^n$. 
%
%
A function $v\in L^1(\Omega)$ is a \emph{function of bounded variation} on $\Omega$, and we write $v\in BV(\Omega)$, if $\mathrm{D}_i v\in \mathcal{M}_b(U)$ for $i=1,\dots,n$, where $\mathrm{D}v=(\mathrm{D}_1 v,\dots, \mathrm{D}_n v)$ is its distributional gradient. A vector-valued function $v\colon \Omega\to \R^m$ is $BV(\Omega;\R^m)$ if $v_j\in BV(\Omega)$ for every $j=1,\dots, m$. We refer to \cite{AmbFusPal} for a detailed treatment of $BV$ functions.

For every $u \in L^1(\Omega; \RR^n)$, we denote by $\mathrm{E} u$ the $\Sym$-valued distribution on $\Omega$, whose components are given by $\mathrm{E}_{ij} u := \frac{1}{2}(\mathrm{D}_j u^i + \mathrm{D}_i u^j)$. The space $BD(\Omega)$ of \emph{functions of bounded deformation} is the space of all $u \in L^1(\Omega;\RR^n)$ such that $\mathrm{E} u \in \M_b(\Omega; \Sym)$. 

A sequence $(u_k)_k$ converges to $u$ weakly* in $BD(\Omega)$ if and only if $u_k \to u$ strongly in $L^1(\Omega;\RR^n)$ and $\mathrm{E} u_k \wtos \mathrm{E} u$ weakly* in $\M_b(\Omega; \Sym)$. We recall that for every $u \in BD(\Omega)$ the measure $\mathrm{E} u$ vanishes on sets of $\H^{n-1}$-measure zero.

%
%

We refer to the book \cite{Tem} for 
general properties of functions of bounded deformation and to \cite{AmbCosDM98} for their fine properties.

\subsection*{Capacity} For the notion of capacity we refer, e.g., to \cite{EvaGar, HeiKilMar}. We recall here the definition and some properties.  

Let $1 \leq \gamma < +\infty$ and let $\Omega$ be a  bounded,  open subset of $\RR^n$. For every subset $B \subset \Omega$, the \emph{$\gamma$-capacity} of $E$ in $\Omega$ is defined by
\[
\Cap_\gamma(E,\Omega) := \inf\Big\{ \int\limits_\Omega |\nabla \alpha|^\gamma \d x  \ : \ \alpha \in W^{1,\gamma}_0(\Omega), \ v \geq 1 \text{ a.e.\ in a neighbourhood of } E \Big\} \, .
\]
A set $E \subset \Omega$ has \emph{$\gamma$-capacity zero} if $\Cap_\gamma(E, \Omega) = 0$ (actually, the definition does not depend on the open set $\Omega$ containing $E$). A property is said to hold \emph{$\Cap_\gamma$-quasi everywhere} (abbreviated as $\Cap_\gamma$-q.e.) if it does not hold for a set of $\gamma$-capacity zero.

If $1 < \gamma \leq n$ and $E$ has $\gamma$-capacity zero, then $\H^s(E) = 0$ for every $s > n-\gamma$.

A function $\alpha \colon \Omega \to \RR$ is $\Cap_\gamma$-{\em quasicontinuous} if for every $\epsilon > 0$ there exists a set $E_\epsilon \subset \Omega$ with $\Cap_\gamma(E_\epsilon, \Omega) < \epsilon$ such that the restriction $\alpha|_{\Omega \sm E_\epsilon}$ is continuous. Note that if $\gamma > n$, a function $\alpha$ is $\Cap_\gamma$-quasicontinous if and only if it is continuous.  

Every function $\alpha \in W^{1,\gamma}(\Omega)$ admits a $\Cap_\gamma$-{\em quasicontinuous representative} $\tilde \alpha$, i.e., a $\Cap_\gamma$-quasicontinuous function $\tilde{\alpha}$ such that $\tilde \alpha = \alpha$ $\L^n$-a.e.\  in $\Omega$. The $\Cap_\gamma$-quasicontinuous representative is essentially unique, that is, if $\tilde \beta$ is another $\Cap_\gamma$-quasicontinuous representative of $\alpha$, then $\tilde \beta = \tilde{\alpha}$ $\Cap_\gamma$-q.e.\  in~$\Omega$.  It satisfies (see \cite[Theorem~4.8.1]{EvaGar})
\begin{equation} \label{eq:precise representative}
    \lim_{\rho \to 0} \frac{1}{|B_\rho(x_0)|} \int\limits_{B_\rho(x_0)} \! |\alpha(x) - \tilde \alpha(x_0)| \d x = 0 \, \quad \text{for $\Cap_\gamma$-q.e.\ $x_0 \in \Omega$} \, .
\end{equation}
 If $\alpha_k \to \alpha$ strongly in $W^{1,\gamma}(\Omega)$, then there exists a subsequence $k_j$ such that $\tilde \alpha_{k_j} \to \tilde \alpha$ $\Cap_\gamma$-q.e.\  in~$\Omega$. 

\subsection*{Slicing}
We give now some notation and recall some preliminary results about slicing. For more details, we refer the reader to \cite{AmbCosDM98}.
For every $\xi \in \mathbb{S}^{n-1}:=\{x\in \Rn \colon |x|=1\}$ and for every set $B\subset \Rn$, we define
\[
\Pi^\xi:=\{z\in \Rn \colon z\cdot\xi=0\}\quad \text{and} \quad B^\xi_y:=\{t\in \R\colon y+t \xi\in B\} \quad \text{for every }y\in \Pi^\xi\,.
\] 
For any scalar function $\alpha\colon \Omega\to \R$ and any vector function $u\colon \Omega\to \Rn$, their slices $\alpha^\xi_y\colon \Omega\xiy \to \R$ and $\widehat{u}\xiy \colon \Omega\xiy \to \R$ are defined by
\[
\alpha\xiy(t):=\alpha(y+t\xi)\quad\text{and}\quad\widehat{u}\xiy:=u(y+t\xi)\cdot \xi\,,
\]
respectively.
If $u_k$ is a sequence in $L^1(\Omega;\Rn)$ and $u\in L^1(\Omega;\Rn)$ such that $u_k\to u$ in $L^1(\Omega;\Rn)$, then for every $\xi \in \mathbb{S}^{n-1}$ there exists a subsequence $u_{k_j}$ such that 
\begin{equation}\label{2608161956}
(\widehat{u}_{k_j})\xiy\to \widehat{u}\xiy\quad\text{ in }L^1(\Omega\xiy) \quad \text{ for }\hn\text{-a.e.\ }y\in \Pi^\xi\,,
\end{equation} 
by Fubini Theorem.

Let us fix $\xi \in \mathbb{S}^{n-1}$. Let $(\mu_y)_{y \in \Pi^\xi}$ be a family of bounded 
measures in $\Omega\xiy$, such that for every Borel set $B\subset \Omega$ the map $y\mapsto \mu_y(B\xiy)$ is Borel measurable and $\hn$-integrable on~$\Pi^\xi$. Then the set function
\begin{equation}
\lambda(B)=\int\limits_{\Pi^\xi} \mu_y(B\xiy)\dhn(y)\quad\text{for all }B\subset \Omega \text{ Borel }
\end{equation}
is a measure, and we write
\[
\lambda=\int\limits_{\Pi^\xi} \mu_y \dhn(y)\quad\text{ in } \M_b(\Omega)\,.
\]
It can be seen that its total variation $|\lambda|$ is given by
\begin{equation}\label{2608161312}
|\lambda|=\int\limits_{\Pi^\xi} |\mu_y| \dhn(y)\quad\text{ in } \M_b(\Omega)\,.
\end{equation}
A function $u\in L^1(\Omega;\Rn)$ belongs to $BD(\Omega)$ if and only if for every direction $\xi \in \mathbb{S}^{n-1}$ (or, equivalently, for any $\xi$ of the form $\xi_i +\xi_j$, $i,\,j=1, \dots, n$ for a fixed basis  $\{\xi_1,\dots,\xi_n \}$ of $\Rn$)
\[
\widehat{u}\xiy\in BV(\Omega\xiy) \text{ for }\hn\text{-a.e.\ } y\in \Pi^\xi \quad\text{and}\quad \int\limits_{\Pi^\xi}|\mathrm{D}\widehat{u}\xiy|(\Omega\xiy)\dhn(y) < +\infty\,.
\]
Moreover, if $u\in BD(\Omega)$ then for every $\xi \in \mathrm{S}^{n-1}$ it holds that
\begin{equation*}
\mathrm{E}u\,\xi\cdot \xi=\int\limits_{\Pi^\xi} \mathrm{D}\widehat{u}\xiy\dhn(y)\,\quad\text{ in } \M_b(\Omega)\,.
\end{equation*} 
In particular, by \eqref{2608161312}, we have that
\begin{equation}\label{2608161858}
|\mathrm{E}u\,\xi\cdot \xi|=\int\limits_{\Pi^\xi} |\mathrm{D}\widehat{u}\xiy|\dhn(y)\,\quad\text{ in } \M_b(\Omega)\,.
\end{equation}
Let $\alpha \in L^1(\Omega)$ and $\gamma\in [1,\infty)$. Then $\alpha\in W^{1,\gamma}(\Omega)$ if and only if for every $\xi \in \mathbb{S}^{n-1}$
\[
\alpha\xiy\in W^{1,\gamma}(\Omega\xiy) \text{ for }\hn\text{-a.e.\ } y\in \Pi^\xi \quad\text{and}\quad \int\limits_{\Pi^\xi}\Big(\int\limits_{\Omega\xiy}|\nabla\alpha\xiy(t)|^\gamma\d t\Big)\dhn(y) < +\infty\,.
\]
 If $\alpha\in W^{1,\gamma}(\Omega)$ then for every $\xi \in \mathrm{S}^{n-1}$ it holds that
\begin{equation}\label{2608162116}
\int\limits_\Omega |\nabla\alpha \cdot \xi|^\gamma \d x=\int\limits_{\Pi^\xi}\Big(\int\limits_{\Omega\xiy}|\nabla\alpha\xiy(t)|^\gamma\d t\Big)\dhn(y)\,.
\end{equation} 
Moreover, $ (\nabla\alpha \cdot \xi)\xiy =\nabla\alpha\xiy$ for $\hn$-a.e.\ $y\in \Pi^\xi$. 
%
\begin{remark} \label{rmk:good representative}
    Let $\alpha \in W^{1,\gamma}(\Omega)$. Then the slice $\widetilde{\alpha}\xiy$ of the $\Cap_\gamma$-quasicontinuous representative~$\tilde \alpha$ of~$\alpha$ is the continuous representative in the equivalence class of $\alpha\xiy$ for $\H^{n-1}$-a.e.\ $y\in \Pi^\xi$. Indeed, $\tilde \alpha$ is the precise representative of $\alpha$ in the sense of~\eqref{eq:precise representative}. By~\cite[Theorem~3.108]{AmbFusPal} it follows that, for $\H^{n-1}$-a.e.\ $y\in \Pi^\xi$, $\tilde \alpha^\xi_y$ is a good representative of $\alpha^\xi_y$, i.e., its pointwise total variation coincides with $|\mathrm{D} \alpha^\xi_y|(\Omega^\xi_y)$. We conclude that $\tilde \alpha^\xi_y$ is continuous by~\cite[Theorem~3.28]{AmbFusPal}.
\end{remark}

\subsection*{Auxiliary results} The proof of Theorem~\ref{thm:lsc with Eu} employs some techniques developed for the proof of \cite[Theorem~4.1]{DMOrlToa17}.  We will use the following well-known formula for the Euclidean norm of symmetric matrices (for a proof cf., e.g., Proposition~\ref{2608161807}). 
\begin{proposition}\label{2608161807}
For every $A\in \Mnn$ we have
\begin{equation*}\label{2608161808}
|A|=\sup_{(\xi^1,\dots,\xi^n)}\bigg(\sum_{i=1}^n |A \xi^i\cdot \xi^i|^2\bigg)^{\!\!1/2} ,
\end{equation*}
where the supremum is taken over all orthonormal bases $(\xi^1,\dots,\xi^n)$ of $\Rn$, or, equivalently, over the columns of all rotations $R\in O(n)$.
\end{proposition}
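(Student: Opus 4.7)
The plan is to exploit the fact that the Euclidean (Frobenius) norm on $\Mnn$ is invariant under orthogonal conjugation and that symmetric matrices are orthogonally diagonalisable.

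Given an orthonormal basis $(\xi^1,\dots,\xi^n)$, I would form the rotation $R \in O(n)$ whose $i$-th column is $\xi^i$. A direct computation shows that the entries of $R^T A R$ are given by $(R^T A R)_{ij} = A\xi^j \cdot \xi^i$, so in particular its diagonal entries are exactly $A\xi^i \cdot \xi^i$. Since $|R^T A R|^2 = \tr(R^T A^T R R^T A R) = \tr(A^T A) = |A|^2$, I obtain
\[
|A|^2 = \sum_{i,j=1}^n |A\xi^j \cdot \xi^i|^2 \;\geq\; \sum_{i=1}^n |A\xi^i \cdot \xi^i|^2,
\]
which proves the inequality $\geq$ in the statement.

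For the reverse inequality, I would use the spectral theorem: since $A$ is symmetric, there exists an orthonormal basis $(\eta^1,\dots,\eta^n)$ of eigenvectors with eigenvalues $\lambda_1,\dots,\lambda_n$. In this basis the off-diagonal terms $A\eta^j \cdot \eta^i = \lambda_j \delta_{ij}$ vanish, so the previous inequality becomes an equality:
\[
\sum_{i=1}^n |A\eta^i \cdot \eta^i|^2 = \sum_{i=1}^n \lambda_i^2 = |A|^2,
\]
showing that the supremum is attained on the eigenbasis. The equivalence with taking the supremum over columns of rotations $R \in O(n)$ is immediate since the columns of such $R$ run precisely over all orthonormal bases (more precisely, all positively oriented ones, but one may incorporate reflections by allowing $O(n)$, which does not change the values of $|A\xi^i \cdot \xi^i|^2$).

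There is essentially no obstacle here: the whole argument reduces to the invariance of the Frobenius norm under $A \mapsto R^T A R$ and to the spectral theorem. The only small thing to verify is the bookkeeping that identifies the diagonal entries of $R^T A R$ with the quantities $A\xi^i \cdot \xi^i$.
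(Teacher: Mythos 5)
Your proof is correct and complete: the inequality $\geq$ follows from the invariance of the Frobenius norm under orthogonal conjugation (so that $|A|^2=\sum_{i,j}|A\xi^j\cdot\xi^i|^2$ for any orthonormal basis, and dropping the off-diagonal terms only decreases the sum), and equality is attained on an orthonormal eigenbasis of $A$ by the spectral theorem. This is the standard argument; the paper itself does not present its own proof of Proposition~\ref{2608161807} (the parenthetical ``for a proof cf.\ Proposition~\ref{2608161807}'' is a circular self-reference, evidently a misprint for an external citation), so there is no in-paper proof to compare against, and yours is the natural one.
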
 
We recall also the following localization lemma. We refer to~\cite[Lemma~15.2]{Bra} for its proof.
\begin{lemma}\label{2608161816}
Let $\Lambda$ be a function defined on the family of open subsets of $\Omega$, which is superadditive on open sets with disjoint compact closure. Let $\lambda$ be a positive measure on $\Omega$, and let $\varphi_j$, $j\in\N$, be nonnegative Borel functions such that
\begin{equation*}
\int\limits_K \varphi_j\d \lambda\leq \Lambda(A)
\end{equation*}
for every open set $A\subset \Omega$, for every compact set $K\subset A$, and for every $j\in \N$. Then 
\begin{equation*}
    \int\limits_K\sup_j \varphi_j \d\lambda \leq \Lambda(A)
\end{equation*}
for every open set $A\subset \Omega$ and for every compact set $K\subset A$. Moreover, if $A$ is an open set such that $\Lambda(A) < +\infty$, then 
\begin{equation*}
\int\limits_K\sup_j \varphi_j \d\lambda=\sup\Big\{\sum_{j=1}^r \int\limits_{K_j} \varphi_j \d\lambda \colon (K_j)_{j=1}^r \text{ disjoint compact subsets of }K, r\in \N\Big\} 
\end{equation*}
for every compact set $K\subset A$.
\end{lemma}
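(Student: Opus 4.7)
\medskip

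The plan is to reduce the supremum over countably many $\varphi_j$ to a pointwise maximum $\psi_r := \max_{1 \le j \le r} \varphi_j$ and let $r \to \infty$ by monotone convergence, and to build a compact partition of $K$ on which the maximum $\psi_r$ is realised index by index. Without loss of generality assume $\Lambda(A) < +\infty$ (otherwise the first inequality is trivial). Fix $r$ and partition $K$ into disjoint Borel sets $E_1,\dots,E_r$ defined by $E_j := \{x \in K : \varphi_j(x) = \psi_r(x),\ \varphi_i(x) < \psi_r(x) \text{ for } i < j\}$, so that $\psi_r = \varphi_j$ on $E_j$.

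The next step is to pass from Borel to compact using inner regularity of the finite measure $\lambda$ on $K$ (note that each $\varphi_j$ is $\lambda$-integrable on $K$ by the standing hypothesis, hence so is $\psi_r$, so $\varphi_j\lambda$ is a finite Borel measure). Given $\epsilon > 0$, choose pairwise disjoint compact sets $K_j \subset E_j$ with $\int_{E_j \setminus K_j} \varphi_j \d\lambda < \epsilon/r$ for each $j$. Since the $K_j$ are pairwise disjoint compact subsets of the open set $A$, elementary separation allows me to pick pairwise disjoint open sets $A_j$ with $K_j \subset A_j \subset \overline{A_j} \subset A$ and with pairwise disjoint compact closures.

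Now invoke the hypothesis for each index: $\int_{K_j} \varphi_j \d\lambda \le \Lambda(A_j)$. Summing over $j$ and exploiting superadditivity of $\Lambda$ on the family $\{A_j\}$ (open, with pairwise disjoint compact closures, all contained in $A$) yields
\begin{equation*}
    \sum_{j=1}^r \int_{K_j} \varphi_j \d\lambda \le \sum_{j=1}^r \Lambda(A_j) \le \Lambda(A).
\end{equation*}
Since $\psi_r = \varphi_j$ on $E_j \supset K_j$, adding the error $\sum_j \int_{E_j \setminus K_j} \varphi_j\d\lambda < \epsilon$ gives $\int_K \psi_r \d\lambda \le \Lambda(A) + \epsilon$, and letting $\epsilon \to 0^+$ and then $r \to \infty$ (monotone convergence $\psi_r \nearrow \sup_j \varphi_j$) produces the first claim $\int_K \sup_j \varphi_j \d\lambda \le \Lambda(A)$.

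For the "moreover" part, one inequality is immediate: for any disjoint compact $K_1,\dots,K_r \subset K$ and any choice of indices (which may be taken to be $1,\dots,r$ after relabelling the $\varphi_j$), one has $\sum_j \int_{K_j} \varphi_j \d\lambda \le \int_K \sup_j \varphi_j \d\lambda$ by disjointness and $\varphi_j \le \sup_i \varphi_i$. For the converse, the compact sets $K_j$ and index assignment built in the previous steps produce $\sum_{j=1}^r \int_{K_j} \varphi_j \d\lambda \ge \int_K \psi_r \d\lambda - \epsilon$; taking the supremum over such configurations and letting $r \to \infty$, $\epsilon \to 0^+$ closes the loop. The main technical obstacle I anticipate is the joint use of inner regularity of $\lambda$ and the separation of the resulting compacts by open sets with pairwise disjoint compact closures inside $A$; the subtlety is that the hypothesis couples $\varphi_j$ with a single open neighbourhood via $\Lambda$, so one must convert the Borel partition into a compact one without losing too much $\lambda$-mass, which is where finiteness of $\Lambda(A)$ is used in an essential way.
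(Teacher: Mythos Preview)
The paper does not give its own proof of this lemma; it simply cites \cite[Lemma~15.2]{Bra}. Your argument is precisely the standard one found there: reduce to the finite maxima $\psi_r = \max_{j\le r}\varphi_j$, partition $K$ into the Borel sets where each $\varphi_j$ realises the maximum, approximate these from inside by compacts via inner regularity, separate the resulting compacts by open sets with pairwise disjoint compact closures inside $A$, and then apply the hypothesis together with superadditivity. The ``moreover'' part is obtained the same way. So your approach coincides with the reference.

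Two small points worth tightening. First, in your chain
\[
\sum_{j=1}^r \int_{K_j}\varphi_j\,\d\lambda \le \sum_{j=1}^r \Lambda(A_j) \le \Lambda(A),
\]
superadditivity alone only gives $\sum_j \Lambda(A_j) \le \Lambda\big(\bigcup_j A_j\big)$; to reach $\Lambda(A)$ you are implicitly using monotonicity of $\Lambda$. Monotonicity is not listed among the hypotheses of the lemma as stated, but it holds in the paper's application (where $\Lambda$ is a $\liminf$ of nonnegative set functions), and the paper itself uses the same implicit step in the proof of Theorem~\ref{thm:lsc with Eu}. You should flag this. Second, your phrase ``inner regularity of the finite measure $\lambda$ on $K$'' is not quite right, since $\lambda$ need not be finite; what you actually use---and correctly indicate in parentheses---is the inner regularity of each finite measure $\varphi_j\,\lambda\mres K$.
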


\section{The lower semicontinuity theorem}\label{3oSec8}

In this section we let $\Omega$ be an open bounded
subset of $\Rn$, $n\geq 2$, $V \colon \R \to [0,+\infty]$ be lower semicontinuous,  and we fix $\gamma > 1$. The starting point of the proof of Theorem~\ref{thm:lsc with Eu} is the following lower bound: given a direction $\xi \in \mathbb{S}^1$, for every  $\alpha \in W^{1,\gamma}(\Omega)$ 
and $u \in BD(\Omega)$ we have that 
\begin{equation*}
    \H(\alpha, \EE u) = \int\limits_{\Omega} V(\widetilde{\alpha})\,\mathrm{d}|\mathrm{E}u| \geq \int\limits_\Omega V(\widetilde{\alpha}) \d|\mathrm{E}u\,\xi\cdot\xi| \, .
\end{equation*}
 In the previous formula $|\cdot|$ denotes  the Euclidean norm (or Frobenius norm) of a matrix
and $\widetilde{\alpha}$ is the $\Cap_\gamma$-quasicontinuous representative of~$\alpha$.
Notice that the definition of $\HH$ is well posed, since
 $\widetilde{\alpha}$ is defined at $\hn$-a.e.\ $x\in\Omega$ and the measure $\mathrm{E}u$ does not charge sets of dimension less than $n-1$.

For this reason it is convenient to introduce the functionals $\mathcal{F}_\xi$, defined for every direction $\xi\in \mathbb{S}^{n-1}$ as follows: for every 
$\alpha\in W^{1,\gamma}(\Omega)$,
$u\in BD(\Omega)$, and $A\subset \Omega$ open, we put
\begin{equation}\label{2608162103}
\mathcal{F}_\xi(\alpha,u;A):=\int\limits_A V(\widetilde{\alpha}) \d|\mathrm{E}u\,\xi\cdot\xi|=\int\limits_{\Pi^\xi}\Big( \int\limits_{A\xiy} V(\widetilde{\alpha}\xiy(t)) \d |\mathrm{D}\widehat{u}\xiy|(t) \Big) \d \hn(y)\,.
\end{equation}
Notice that the second equality in the formula above follows from \eqref{2608161858}.

We first prove the lower semicontinuity of these functionals, and then we deduce Theorem~\ref{thm:lsc with Eu} using Proposition~\ref{2608161807} and Lemma~\ref{2608161816}.
\begin{proposition}\label{2608161903}
Let $\xi \in \mathbb{S}^{n-1}$ and let $\alpha_k$, 
$\alpha \in W^{1,\gamma}(\Omega)$, 
$u_k$, $u\in BD(\Omega)$ be such that $\alpha_k\wto\alpha$ in $\Hu$ and $u_k\wtos u$ in $BD(\Omega)$. Then
\begin{equation}\label{2608162001}
\mathcal{F}_\xi(\alpha,u;A)\leq \liminf_{k\to \infty} \mathcal{F}_\xi (\alpha_k, u_k; A)
\end{equation}
for every open set $A\subset \Omega$.
\end{proposition}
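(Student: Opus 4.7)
The plan is to exploit the Fubini representation \eqref{2608162103} of $\mathcal{F}_\xi$ and apply Fatou's lemma on $\Pi^\xi$, reducing the statement to a one-dimensional lower semicontinuity on $\hn$-a.e.\ slice. After the standard reductions---assume the liminf in \eqref{2608162001} is finite and pass to a subsequence (not relabeled) realising it as a limit---the weak convergence $\alpha_k\wto\alpha$ in $W^{1,\gamma}(\Omega)$ gives, via local Rellich compactness and a further subsequence, $\alpha_k\to\alpha$ in $L^\gamma_{\mathrm{loc}}(\Omega)$, while $u_k\wtos u$ in $BD(\Omega)$ already delivers $u_k\to u$ in $L^1(\Omega;\R^n)$. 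Using Fubini in the direction $\xi$ together with \eqref{2608161956}, I would pass to a further subsequence (still unrelabeled) such that for $\hn$-a.e.\ $y\in\Pi^\xi$ one has $(\alpha_k)\xiy\to\alpha\xiy$ and $(\widehat u_k)\xiy\to\widehat u\xiy$ in $L^1(\Omega\xiy)$.

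Introduce
\begin{equation*}
F_k(y):=\int\limits_{A\xiy}V((\widetilde\alpha_k)\xiy)\d|\mathrm{D}(\widehat u_k)\xiy|,\qquad f_k(y):=\int\limits_{\Omega\xiy}|\nabla(\alpha_k)\xiy|^\gamma\d t+|\mathrm{D}(\widehat u_k)\xiy|(\Omega\xiy),
\end{equation*}
and define $F(y)$ analogously for the limit pair. By \eqref{2608162116}--\eqref{2608161858}, $(f_k)$ is bounded in $L^1(\Pi^\xi)$, hence $\liminf_k f_k(y)<+\infty$ for $\hn$-a.e.\ $y$. For any such $y$ with $L:=\liminf_k F_k(y)<+\infty$, I would extract a $y$-dependent subsequence $k_m$ along which $F_{k_m}(y)\to L$ while $f_{k_m}(y)$ stays bounded---this in-tandem extraction being the delicate point, resolved outside a further $\hn$-null $y$-set using the integrated bound on $f_k$. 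Once achieved, $(\alpha_{k_m})\xiy$ is bounded in $W^{1,\gamma}(\Omega\xiy)$ and $(\widehat u_{k_m})\xiy$ in $BV(\Omega\xiy)$; since $\gamma>1$, the one-dimensional compact embedding $W^{1,\gamma}\hookrightarrow C$, combined with Remark~\ref{rmk:good representative}, gives $(\widetilde\alpha_{k_m})\xiy\to\widetilde\alpha\xiy$ uniformly on $\overline{\Omega\xiy}$, and $\mathrm{D}(\widehat u_{k_m})\xiy\wtos\mathrm{D}\widehat u\xiy$ weakly$^*$ in $\M_b(\Omega\xiy)$.

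To accommodate the merely lower semicontinuous, possibly $+\infty$-valued $V$, I would introduce the Moreau--Yosida approximants $V_n(s):=\inf_{t\in\R}\{V(t)+n|s-t|\}$, which are $n$-Lipschitz, nondecreasing in $n$, and increase pointwise to $V$ by lower semicontinuity. For each fixed $n$, $V_n\circ\widetilde\alpha\xiy$ is continuous on $\overline{\Omega\xiy}$ and $V_n\circ(\widetilde\alpha_{k_m})\xiy\to V_n\circ\widetilde\alpha\xiy$ uniformly, whence
\begin{equation*}
\int\limits_{A\xiy}V_n((\widetilde\alpha_{k_m})\xiy)\d|\mathrm{D}(\widehat u_{k_m})\xiy|=\int\limits_{A\xiy}V_n(\widetilde\alpha\xiy)\d|\mathrm{D}(\widehat u_{k_m})\xiy|+o(1).
\end{equation*}
Reshetnyak's theorem (Theorem~\ref{teo:Res}), applied on $\Omega\xiy$ to the lower semicontinuous integrand $H(t,v):=V_n(\widetilde\alpha\xiy(t))\,\chi_{A\xiy}(t)\,|v|$ against the weakly$^*$-convergent measures $\mathrm{D}(\widehat u_{k_m})\xiy$, yields $\int_{A\xiy}V_n(\widetilde\alpha\xiy)\d|\mathrm{D}\widehat u\xiy|\le L$; monotone convergence in $n$ produces $F(y)\le L=\liminf_k F_k(y)$ for $\hn$-a.e.\ $y$, and a final application of Fatou's lemma on $\Pi^\xi$ delivers \eqref{2608162001}.

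The main obstacle I foresee is the in-tandem subsequence extraction on each slice: securing $k_m$ with $F_{k_m}(y)\to\liminf_k F_k(y)$ while keeping the sliced $W^{1,\gamma}$- and $BV$-norms under control, which the $L^1(\Pi^\xi)$-bound on $f_k$ allows one to handle outside a further $\hn$-negligible set. The low regularity of $V$ is, by contrast, a mild difficulty neatly dispatched by Moreau--Yosida approximation from below, which brings Reshetnyak's theorem into its standard continuous form on each slice.
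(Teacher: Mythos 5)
The overall slicing strategy you use is the right one, and your Moreau--Yosida treatment of the lower semicontinuous $V$ is a legitimate (if slightly more explicit) way to make the appeal to Reshetnyak's theorem rigorous. The proof, however, breaks down at the step you yourself flag as ``delicate'': the in-tandem subsequence extraction.

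Concretely, you claim that for $\hn$-a.e.\ $y$ one can find a $y$-dependent subsequence $k_m$ along which $F_{k_m}(y)\to L:=\liminf_k F_k(y)$ \emph{and simultaneously} $f_{k_m}(y)$ stays bounded, and you assert this is ``resolved outside a further $\hn$-null set using the integrated bound on $f_k$.'' This inference is not valid: a uniform $L^1(\Pi^\xi)$-bound on $f_k$ only gives $\liminf_k f_k(y)<\infty$ for a.e.\ $y$; it does not force $f_k$ to stay bounded along the \emph{particular} subsequence that realises $\liminf_k F_k(y)$. One can easily arrange ``moving-bump'' configurations (a shrinking window $I_k$ of length $1/k$ cycling around $\Pi^\xi$, $f_k=k\,\chi_{I_k}$, $F_k$ small on $I_k$ and larger off it) in which $\|f_k\|_{L^1}$ is bounded, yet for \emph{every} $y$ the only indices realising $\liminf_k F_k(y)$ are precisely those where $f_k(y)\to\infty$. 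So the pointwise inequality $F(y)\le\liminf_k F_k(y)$ cannot be obtained this way. The paper avoids this obstruction with an $\epsilon$-regularisation: it applies Fatou to the \emph{perturbed} quantity $G_k(y):= \int_{A^\xi_y}(V((\tilde\alpha_k)^\xi_y)+\epsilon)\,d|D(\widehat u_k)^\xi_y| + \epsilon\int_{A^\xi_y}|\nabla(\tilde\alpha_k)^\xi_y|^\gamma\,dt$. Since all three summands are nonnegative, \emph{any} subsequence realising $\liminf_k G_k(y)<\infty$ automatically has bounded $BV$- and $W^{1,\gamma}$-norms on the slice; one then gets $F(y)\le\liminf_k G_k(y)$, integrates, and uses the a priori bounds on $|\EE u_k\,\xi\cdot\xi|(A)$ and $\|\nabla\alpha_k\cdot\xi\|_{L^\gamma}^\gamma$ to send $\epsilon\to0$. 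Without this $\epsilon$-trick (or an equivalent device), the crucial slice-wise compactness is simply not available, and your argument has a genuine gap at its core.
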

\begin{proof}
Let $\xi\in \mathbb{S}^{n-1}$, $A\subset \Omega$ open, $\alpha_k\wto\alpha$ in $\Hu$, and $u_k\wtos u$ in $BD(\Omega)$. Let us fix $\varepsilon>0$. By \eqref{2608161956}, upon extracting a (not relabeled) subsequence, we deduce that,  for $\hn\text{-a.e.\ }y\in \Pi^\xi$, 
\begin{equation}\label{2608162042}
(\widetilde{\alpha}_k)\xiy\to \widetilde{\alpha}\xiy\,,\quad (\widehat{u}_{k})\xiy\to \widehat{u}\xiy \quad \text{in }L^1(\Omega\xiy)\,,
\end{equation}
 and that the liminf in \eqref{2608162001} (that we may assume finite) is actually a limit.

We claim that for $\hn\text{-a.e.\ }y\in \Pi^\xi$
\begin{equation}\label{2608162018}
\int\limits_{A\xiy}  V(\widetilde{\alpha}\xiy)  \d|\mathrm{D} \widehat{u}\xiy \EEE|\leq \liminf_{k\to \infty}\bigg(\int\limits_{A\xiy} \big( V((\widetilde{\alpha}_k)\xiy) + \varepsilon  \big) \d|\mathrm{D}(\widehat{u}_k)\xiy| + \varepsilon \int\limits_{A\xiy}|\nabla(\widetilde{\alpha}_k)\xiy|^\gamma\,\d t \bigg)\,.
\end{equation}
To prove the claim, we start by observing that the boundedness of $\alpha_k$ in $W^{1,\gamma}(\Omega)$ and of $u_k$ in $BD(\Omega)$  implies
\begin{equation*}
    \begin{split}
        +\infty & > \liminf_{k \to \infty} \big[ \mathcal{F}_\xi(\alpha_k, u_k;A) +  \epsilon |\mathrm{E} u_k \xi \cdot \xi|(A)  + \epsilon\|\nabla \alpha_k \cdot \xi \|_{L^\gamma(A) }^\gamma \big] \\
        & = \liminf_{k \to \infty} \int_{\Pi^\xi} \bigg( \int\limits_{A\xiy} \big( V((\widetilde{\alpha}_k)\xiy(t))  + \epsilon  \big) \d|\mathrm{D}(\widehat{u}_k)\xiy|(t)+\varepsilon \int\limits_{A\xiy}|\nabla(\widetilde{\alpha}_k)\xiy(t)|^\gamma\,\d t \bigg) \d \H^{n-1}(y) \\
        & \geq  \int_{\Pi^\xi} \liminf_{k \to \infty} \bigg( \int\limits_{A\xiy} \big( V((\widetilde{\alpha}_k)\xiy(t))  + \epsilon  \big)\d|\mathrm{D}(\widehat{u}_k)\xiy|(t)+\varepsilon \int\limits_{A\xiy}|\nabla(\widetilde{\alpha}_k)\xiy(t)|^\gamma\,\d t \bigg) \d \H^{n-1}(y) \, ,
    \end{split}
\end{equation*}
where in the equality we applied~\eqref{2608162103} and Fubini's Theorem, while the last inequality follows from Fatou's Lemma.  
From the previous inequality it follows that for $\H^{n-1}$-a.e.\ $y\in \Pi^\xi$ 
\begin{equation}  \label{eq:liminf of section finite}
    \liminf_{k \to \infty} \bigg( \int\limits_{A\xiy} \big( V((\widetilde{\alpha}_k)\xiy)  + \epsilon   \big) \d|\mathrm{D}(\widehat{u}_k)\xiy|+\varepsilon \int\limits_{A\xiy}|\nabla(\widetilde{\alpha}_k)\xiy|^\gamma\,\d t \bigg) < +\infty \, .
\end{equation}
Moreover we remark that for $\H^{n-1}$-a.e.\ $y \in \Pi^\xi$ we have that $(\tilde \alpha_k)^\xi_y$ is the continuous representative in the equivalence class of $(\alpha_k)^\xi_y$ for every $k$ and $\tilde \alpha^\xi_y$ is the continuous representative in the equivalence class of $ \alpha^\xi_y$.\footnote{Indeed, let $N_k := \{ y \in \Pi^\xi \colon \text{$(\tilde \alpha_k)^\xi_y$ is not the continuous representative  of $(\alpha_k)^\xi_y$} \}$. By Remark~\ref{rmk:good representative} we have that $\H^{n-1}(N_k) = 0$. The set $N := \bigcup_k N_k$ satisfies $\H^{n-1}(N) = 0$ and for every $y \in \Pi^\xi \sm N$ we have that $(\tilde \alpha_k)^\xi_y$ is the continuous representative in the equivalence class of $(\alpha_k)^\xi_y$ for every $k$.}
 Let us fix~$y \in \Pi^\xi$ that satisfies this last property and~\eqref{2608162042}, \eqref{eq:liminf of section finite}. We extract a subsequence~$k_j$, possibly depending on $y$, such that 
 \begin{equation}  \label{eq:liminf of section is limit}
    \begin{split}
        & \lim_{j \to \infty} \bigg( \int\limits_{A\xiy} \big( V((\widetilde{\alpha}_{k_j})\xiy)   + \epsilon   \big)\d|\mathrm{D}(\widehat{u}_{k_j})\xiy|+\varepsilon \int\limits_{A\xiy}|\nabla(\widetilde{\alpha}_{k_j})\xiy|^\gamma\,\d t \bigg) \\
        &  \quad = \liminf_{k \to \infty} \bigg( \int\limits_{A\xiy} \big( V((\widetilde{\alpha}_k)\xiy)  + \epsilon   \big)\d|\mathrm{D}(\widehat{u}_k)\xiy|+\varepsilon \int\limits_{A\xiy}|\nabla(\widetilde{\alpha}_k)\xiy|^\gamma\,\d t \bigg) < +\infty \, .
    \end{split}
\end{equation}
Since $\varepsilon$ is fixed, the sequences $(\widehat{u}_{k_j})\xiy$ and  $(\widetilde{\alpha}_{k_j})\xiy$ are bounded in $BV(\Omega\xiy)$ and $W^{1,\gamma}(\Omega\xiy)$, respectively.  Together with~\eqref{2608162042}, \EEE this implies that 
  \begin{equation*}
(\widetilde{\alpha}_{k_j})\xiy  \wto  \widetilde{\alpha}\xiy \quad\text{in }W^{1,\gamma}(\Omega\xiy)\,,\quad  (\widehat{u}_{k_j})\xiy   \wtos   \widehat{u}\xiy\quad \text{in }BV(\Omega\xiy)\,.
\end{equation*}
Recalling that  $(\tilde \alpha_k)^\xi_y$ is the continuous representative of $(\alpha_k)^\xi_y$ for every $k$ and $\tilde \alpha^\xi_y$ is the continuous representative of $ \alpha^\xi_y$  we deduce that $(\widetilde{\alpha}_{k_j})\xiy \to \widetilde{\alpha}\xiy$ uniformly. \EEE Applying Theorem~\ref{teo:Res}, we deduce that
\begin{equation*}
\begin{split}
\int\limits_{A\xiy} V(\widetilde{\alpha}\xiy)\d|\mathrm{D}\widehat{u}\xiy|&\leq \liminf_{j\to \infty} \int\limits_{A\xiy} V((\widetilde{\alpha}_{k_j})\xiy)\d|\mathrm{D}(\widehat{u}_{k_j})\xiy| \\ &\leq \liminf_{k\to \infty}   \Bigg[\int\limits_{A\xiy} ( V((\widetilde{\alpha}_{k})\xiy)  + \epsilon  \big)\d|\mathrm{D}(\widehat{u}_{k})\xiy|+\varepsilon \int\limits_{A\xiy}|\nabla(\widetilde{\alpha}_{k})\xiy|^\gamma\,\d t \Bigg]\,.
\end{split}
\end{equation*}
This concludes the proof of the claim in~\eqref{2608162018}.

Integrating~\eqref{2608162018} with respect to $y \in \Pi^\xi$ and recalling \eqref{2608162103} and \eqref{2608162116}, we deduce by Fatou Lemma that
\begin{equation*}
\mathcal{F}_\xi(\alpha,u;A)\leq \liminf_{k\to \infty} \mathcal{F}_\xi(\alpha_k,u_k;A) +  \epsilon \limsup_{k \to \infty} |\mathrm{E} u_k \xi \cdot \xi|(A)   + \varepsilon \limsup_{k\to \infty} \int\limits_A |\nabla \alpha_k \cdot \xi|^\gamma \,\d x\,. 
\end{equation*}
Since the sequence $\alpha_k$ is bounded in $W^{1,\gamma}(\Omega)$,   $u_k$ is bounded in $BD(\Omega)$,   and $\varepsilon$ is arbitrary, the proof is concluded.
\end{proof}

We are now ready to prove the main result.

\begin{proof}[Proof of Theorem~\ref{thm:lsc with Eu}]
Let $(\xi^1,\dots,\xi^n)$ be an orthonormal basis of $\Rn$, and let us prove first that, for every 
$\alpha\in   W^{1,\gamma}(\Omega)$,  
$u\in BD(\Omega)$, and $A\subset \Omega$ open, it holds
\begin{equation}\label{2608162344}
\bigg(\sum_{i=1}^n\mathcal{F}_{\xi^i}(\alpha,u;A)^2\bigg)^{\!\! 1/2} \! \leq \int\limits_A V(\widetilde{\alpha})\d|\mathrm{E}u|  \,.
\end{equation}
Indeed, by H\"older's Inequality with respect to the measure $V(\tilde \alpha) |\mathrm{E}u|$ we get that
\begin{equation*}
\begin{split}
\mathcal{F}_{\xi^i}(\alpha,u;A)^2&=\bigg(\, \int\limits_A V(\widetilde{\alpha})\bigg| \frac{\mathrm{d} \mathrm{E}u}{\mathrm{d} |\mathrm{E}u|}\xi^i\cdot \xi^i\bigg| \d |\mathrm{E}u|\bigg)^{\!\! 2}\\&\leq \bigg(\, \int\limits_A V(\widetilde{\alpha})\bigg| \frac{\mathrm{d} \mathrm{E}u}{\mathrm{d}  |\mathrm{E}u|}\xi^i\cdot \xi^i\bigg|^2 \d |\mathrm{E}u|\bigg) \int\limits_A V(\widetilde{\alpha}) \d |\mathrm{E}u|\,.
\end{split}
\end{equation*}
Summing for $i=1,\dots,n$,  we obtain that
\begin{equation*}
\begin{split}
\bigg(\sum_{i=1}^n\mathcal{F}_{\xi^i}(\alpha,u;A)^2\bigg)^{\!\! 1/2} &\leq \bigg(\int\limits_A V(\widetilde{\alpha})\sum_{i=1}^n\bigg| \frac{\mathrm{d} \mathrm{E}u}{\mathrm{d}  |\mathrm{E}u|}\xi^i\cdot \xi^i\bigg|^2 \d |\mathrm{E}u|\bigg)^{\!\! 1/2}  \bigg(\int\limits_A V(\widetilde{\alpha}) \d |\mathrm{E}u|\bigg)^{\!\! 1/2}\hspace{-1em}\\&\leq \int\limits_A V(\widetilde{\alpha}) \d |\mathrm{E}u|\,,
\end{split}
\end{equation*}
and thus \eqref{2608162344} is proven. Notice that in the last inequality above we have used   Proposition~\ref{2608161808}   and the fact that 
\begin{equation}\label{2908162106}
\bigg| \frac{\mathrm{d} \mathrm{E}u}{\mathrm{d}  |\mathrm{E}u|}(x)\bigg|=1\quad\text{for }|\mathrm{E}u|\text{-a.e.\ }x\in \Omega\,.
\end{equation}

Let $\alpha_k,\,\alpha \in   W^{1,\gamma}(\Omega)$,  
$u_k$, $u\in BD(\Omega)$ such that $\alpha_k\wto\alpha$ in $\Hu$ and $u_k\wtos u$ in~$BD(\Omega)$. Let us prove \eqref{2608161025}.
Let $\Lambda$ be the function defined on every open set $A\subset\Omega$ by
\begin{equation} \label{eq:def of Lambda}
\Lambda(A):=\liminf_{k\to \infty} \int\limits_A V(\widetilde{\alpha}_k) \d |\mathrm{E}u_k|\,.
\end{equation}
Moreover, let $R_j$ be a sequence dense in $O(n)$ and let $\xi^1_j,\dots,\xi^n_j$ be the column vectors of~$R_j$. Let us
define the vector functions $\varphi_j=(\varphi_j^1,\dots,\varphi_j^n)$ by putting
\begin{equation}\label{2908162105}
\varphi_j^i(x):=V(\widetilde{\alpha}(x))\bigg|\frac{\mathrm{d} \mathrm{E}u}{\mathrm{d}  |\mathrm{E}u|}(x)\,\xi^i_j\cdot \xi^i_j\bigg|\quad\text{for every }j\in\N,\,i=1,\dots,n,\,\text{and }x\in\Omega\,.
\end{equation}
Recalling \eqref{2608162103}, it holds that for every $j\in \N$ and $A\subset \Omega$ open  
\begin{equation}\label{2908162046}
\bigg|\int\limits_A\varphi_j\d|\mathrm{E}u|\bigg|=  \bigg(\sum_{i=1}^n \bigg( \int\limits_A\varphi_j^i\d|\mathrm{E}u| \bigg)^{\!\!2} \, \bigg)^{\!\! 1/2}  = \bigg(\sum_{i=1}^n\mathcal{F}_{\xi_j^i}(\alpha,u;A)^2\bigg)^{\!\! 1/2}.
\end{equation}
By Proposition~\ref{2608161903}, for every $j\in\N$, $i=1,\dots,n$, and $A\subset \Omega$ open, we have that
\begin{equation*}
\mathcal{F}_{\xi_j^i}(\alpha,u;A)\leq \liminf_{k\to \infty} \mathcal{F}_{\xi_j^i} (\alpha_k, u_k; A)\,,
\end{equation*} 
and then, by the superadditivity of the liminf, it follows that
\begin{equation*}
\bigg(\sum_{i=1}^n\mathcal{F}_{\xi_j^i}(\alpha,u;A)^2\bigg)^{\!\!1/2} \leq \liminf_{k\to \infty} \bigg(\sum_{i=1}^n\mathcal{F}_{\xi_j^i} (\alpha_k, u_k; A)^2\bigg)^{\!\!1/2},
\end{equation*} 
By the previous inequality, \eqref{2608162344}, \eqref{eq:def of Lambda}, and \eqref{2908162046} we obtain that 
\begin{equation}
\bigg|\int\limits_A\varphi_j\d|\mathrm{E}u|\bigg|\leq \Lambda(A)\,.
\end{equation}
Using the superadditivity of $\Lambda$, we infer that
\begin{equation*}
\begin{split}
\int\limits_K|\varphi_j|\d|\mathrm{E}u|&=  \sup\bigg\{ \sum_{h=1}^r\bigg|\int\limits_{B^h}\varphi_j\d|\mathrm{E}u|\bigg|\colon(B^h)_{h=1}^r \text{ disjoint Borel subsets of }K,\,r\in\N\bigg\}  \\ 
&=\sup\bigg\{ \sum_{h=1}^r\bigg|\int\limits_{K^h}\varphi_j\d|\mathrm{E}u|\bigg|\colon(K^h)_{h=1}^r \text{ disjoint compact subsets of }K,\,r\in\N\bigg\}
\\&\leq \sup\bigg\{\sum_{h=1}^r\Lambda(A^h)\colon(A^h)_{h=1}^r \, , \  A^h \subset A \text{ with disjoint compact closure},  \,r\in\N\bigg\} \\
&\leq \Lambda(A)
\end{split}
\end{equation*}
for every compact set $K$ and for every open set $A$ such that $K\subset A \subset \Omega$.
Lemma~\ref{2608161816} gives that
\begin{equation}
\int\limits_K\sup_{j}|\varphi_j|\d|\mathrm{E}u|\leq \Lambda(A)\,.
\end{equation}
By~\eqref{2908162106}, \eqref{2908162105}, and Proposition~\ref{2608161807} we deduce that 
\begin{equation*}
    \sup_{j}|\varphi_j| = V(\tilde \alpha)
\end{equation*}
and therefore
\begin{equation*}
\int\limits_KV(\widetilde{\alpha})\d|\mathrm{E}u|\leq \Lambda(A)\,,
\end{equation*}
for every compact set $K$ such that $K\subset A$.
We conclude the proof by the arbitrariness of~$K$ and by recalling the definition of $\Lambda$ in~\eqref{eq:def of Lambda}.
\end{proof}

\begin{remark}
    The proof of Theorem~\ref{thm:lsc with Eu} also works in different settings, e.g., in the case where the plastic potential is defined through a convex and positively one-homogeneous function $H \colon \Mnn \to [0,+\infty)$ which satisfies 
    \begin{equation*}
        H(A)^q =  \sup_{(\xi_1, \dots, \xi_n)} \sum_{i=1}^n |A \xi^i \cdot \xi^i|^q , \quad q \in (1,\infty) \, ,
    \end{equation*}
    where the supremum is taken over all orthonormal bases $(\xi^1,\dots,\xi^n)$ of $\RR^n$. Such matrix norms $H$ are usually referred to as $q$-Schatten norm, cf.~\cite{HorJoh}.
\end{remark}



In the remaining part of this section we show under which assumptions the technique in the proof of Theorem~\ref{thm:lsc with Eu} can be adapted to prove the lower semicontinuity of the plastic potential $\H$ introduced in~\eqref{eq:damage von Mises}. 
We consider here a slight generalisation, where we allow the plastic strain $p$ to charge some part of $\dom$, the boundary of $\Omega$.

Let us assume that the boundary of $\Omega$
is  Lipschitz and partitioned as 
\[
\dom=\dod\cup \don\cup N\,,
\] 
with $\dod$ and $\don$  relatively open, $\dod \cap \don =\emptyset$, $\mathcal{H}^{n-1}(N)=0$, and
$\dod \neq \emptyset$. 
A~boundary datum $w \in H^1(\Omega;\Rn)$ will be suitably imposed on the Dirichlet boundary $\dod$.

We consider from now on the functional $\H$ as defined as in perfect plasticity with damage, where it represents the \emph{plastic potential}. 
This is defined on the class of admissible~$p$ defined as follows. We introduce the set of admissible triples of \emph{displacement}, \emph{elastic strain}, and \emph{plastic strain} for the boundary datum $w$,
\begin{equation*}
\begin{split}
\mathcal{A}(w):=\{(u,e,p) \in  BD(\Omega)&  \x \Lnn \x \MbD : \\ & \mathrm{E}u=e + p \,\text{ in }\Omega\, ,\, p \mres \dod = (w-u) \odot \nu \, \mathcal{H}^{n-1}\mres \dod\}\, .
\end{split}
\end{equation*}
A plastic strain $p$ is \emph{admissible} (for $w$) if it belongs to
\begin{equation*} 
\Pi(\Omega):=\{ p \in \MbD \colon \text{there exist } u,\, e \text{ such that } (u, e, p) \in \mathcal{A}(w) \}\,.
\end{equation*}

The functional $\H$ is then defined on 
 $W^{1,\gamma}(\Omega) \EEE
\times \Pi(\Omega)$   by 
\begin{equation} \label{eq:precise H}
\H(\alpha, p):=\hspace{-0.7em}\int\limits_{\Omega \cup \dod}\hspace{-0.7em} V(\tilde \alpha(x)) \, \d |p|(x) \,.
\end{equation}
We now prove the claimed lower semicontinuity result.  We stress that a crucial assumption for the validity of our proof is the strong convergence of the elastic strain. Up to our knowledge, a proof under the sole assumption of weak convergence of the elastic strain is still missing. 

\begin{theorem}\label{thm:main}
     Let $\Omega$ be an open bounded Lipschitz subset of $\Rn$, $V \colon \R \to [0, +\infty]$ be lower semicontinuous,   and let $\gamma >1$. Let $\alpha_k$, $\alpha\in   W^{1,\gamma}(\Omega)$   and $(u_k,e_k,p_k)$, $(u,e,p)\in \mathcal{A}(w)$ be such that $\alpha_k\wto \alpha$ in $\Hu$, $u_k\wtos u$ in $BD(\Omega)$, and $e_k\to e$ strongly in $\Lnn$. Then
    \begin{equation*}
    \H(\alpha,p)\leq \liminf_{k\to \infty} \H(\alpha_k, p_k)\,.
    \end{equation*}
    \end{theorem}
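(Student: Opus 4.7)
The plan is to reduce the problem to the interior of a larger bounded open set $\tilde\Omega\supset\Omega\cup\dod$ on which $\dod$ becomes interior, and then adapt the scheme of Section~\ref{3oSec8} with the symmetric gradient $\mathrm Eu$ replaced by the plastic measure $p=\mathrm Eu-e\,\L^n$, exploiting the strong $L^2$ convergence of $e_k$ to transfer slicewise.

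First I would pick a bounded open set $\tilde\Omega$ with $\Omega\cup\dod\subset\tilde\Omega$ and $\tilde\Omega\cap\dom=\dod$ (possible since $\dod$ is relatively open in the Lipschitz boundary $\dom$). I extend $u_k,u$ by $w$ on $\tilde\Omega\setminus\overline\Omega$ to functions in $BD(\tilde\Omega)$; extend $\alpha_k,\alpha$ to $W^{1,\gamma}(\tilde\Omega)$ via a continuous linear extension operator $W^{1,\gamma}(\Omega)\to W^{1,\gamma}(\RR^n)$ (available because $\Omega$ is Lipschitz); and extend $e_k,e$ by $\mathrm Ew$ on $\tilde\Omega\setminus\overline\Omega$ to $L^2(\tilde\Omega;\Sym)$. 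The $BD$-trace identity on $\dod$ shows that
\begin{equation*}
\mathrm Eu_k=e_k\,\L^n+\bar p_k\quad\text{in }\tilde\Omega,
\end{equation*}
where $\bar p_k$ coincides with $p_k$ on $\Omega\cup\dod$ and vanishes on $\tilde\Omega\setminus\overline\Omega$; hence $\bar p_k\wtos\bar p$ weakly$^*$ in $\M_b(\tilde\Omega;\MnnD)$, $u_k\wtos u$ in $BD(\tilde\Omega)$, $\alpha_k\wto\alpha$ in $W^{1,\gamma}(\tilde\Omega)$, and $e_k\to e$ strongly in $L^2(\tilde\Omega;\Sym)$. Since $\gamma>1$, $\Cap_\gamma$-null sets are $\hn$-null and hence $|\bar p_k|$-null, so the $\Cap_\gamma$-quasicontinuous representative $\tilde\alpha_k$ of the extension is defined $|\bar p_k|$-a.e.\ and matches the one of the original on $\Omega$ $\Cap_\gamma$-q.e. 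The target inequality reduces to
$\int_{\tilde\Omega}V(\tilde\alpha)\,\d|\bar p|\leq\liminf_k\int_{\tilde\Omega}V(\tilde\alpha_k)\,\d|\bar p_k|$.

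Next I would adapt Proposition~\ref{2608161903} to the plastic measure. For $\xi\in\mathbb S^{n-1}$ and $A\subset\tilde\Omega$ open, set
\begin{equation*}
\tilde{\mathcal F}_\xi(\alpha,u,e;A):=\int_A V(\tilde\alpha)\,\d\bigl|\mathrm Eu\,\xi\cdot\xi-(e\,\xi\cdot\xi)\,\L^n\bigr|,
\end{equation*}
and prove $\tilde{\mathcal F}_\xi(\alpha,u,e;A)\leq\liminf_k\tilde{\mathcal F}_\xi(\alpha_k,u_k,e_k;A)$. For $\hn$-a.e.\ $y\in\Pi^\xi$, the $\xi$-slice of the scalar measure $\mathrm Eu_k\xi\cdot\xi-(e_k\xi\cdot\xi)\L^n$ is $\mathrm D\widehat u_k\xiy-(e_k\xi\cdot\xi)\xiy\,\L^1\in\M_b(\tilde\Omega\xiy)$. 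The crucial new ingredient beyond Proposition~\ref{2608161903} is that the strong convergence $e_k\to e$ in $L^2(\tilde\Omega)$, via Fubini and a further subsequence, yields slicewise strong convergence $(e_k\xi\cdot\xi)\xiy\to(e\,\xi\cdot\xi)\xiy$ in $L^2(\tilde\Omega\xiy)$, and in particular in $L^1$, for $\hn$-a.e.\ $y\in\Pi^\xi$. Combined with $\mathrm D\widehat u_k\xiy\wtos\mathrm D\widehat u\xiy$ weakly$^*$ in $\M_b(\tilde\Omega\xiy)$, the sliced combined measures converge weakly$^*$. The remaining argument of Proposition~\ref{2608161903} then applies verbatim: inserting an $\varepsilon$-perturbation to bound $(\tilde\alpha_k)\xiy$ in $W^{1,\gamma}(\tilde\Omega\xiy)$, extracting a subsequence along which $(\tilde\alpha_k)\xiy\to\tilde\alpha\xiy$ uniformly, and applying Reshetnyak's Theorem~\ref{teo:Res} with the lsc integrand $V$ against the weakly$^*$ convergent sliced measures. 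Integration over $\Pi^\xi$ via Fatou and $\varepsilon\to 0$ close the step.

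The conclusion mimics the proof of Theorem~\ref{thm:lsc with Eu} with $\mathrm Eu$ and $\mathcal F_\xi$ replaced by $\bar p$ and $\tilde{\mathcal F}_\xi$. Hölder's inequality with respect to the measure $V(\tilde\alpha)|\bar p|$, combined with Proposition~\ref{2608161807} and $|\d\bar p/\d|\bar p||=1$ $|\bar p|$-a.e., yields
\begin{equation*}
\Bigl(\sum_{i=1}^n\tilde{\mathcal F}_{\xi^i}(\alpha,u,e;A)^2\Bigr)^{\!1/2}\leq\int_A V(\tilde\alpha)\,\d|\bar p|
\end{equation*}
for every orthonormal basis $(\xi^1,\dots,\xi^n)$ and every open $A\subset\tilde\Omega$. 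Taking a dense sequence $R_j\in O(n)$ with columns $\xi_j^1,\dots,\xi_j^n$, defining $\Lambda(A):=\liminf_k\int_A V(\tilde\alpha_k)\,\d|\bar p_k|$ and $\varphi_j^i(x):=V(\tilde\alpha(x))\bigl|\tfrac{\mathrm d\bar p}{\mathrm d|\bar p|}(x)\,\xi_j^i\cdot\xi_j^i\bigr|$, the superadditivity of $\liminf$ and the previous step give $|\int_A\varphi_j\,\d|\bar p||\leq\Lambda(A)$. Lemma~\ref{2608161816} then yields $\int_K\sup_j|\varphi_j|\,\d|\bar p|\leq\Lambda(A)$, and Proposition~\ref{2608161807} identifies $\sup_j|\varphi_j|=V(\tilde\alpha)$ $|\bar p|$-a.e.; the arbitrariness of the compact set $K\Subset A$ closes the argument. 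The main obstacle is the sliced lower bound: it needs slicewise $L^1$ convergence of $(e_k\xi\cdot\xi)\xiy$ for $\hn$-a.e.\ $y$, which the strong $L^2$ hypothesis on $e_k$ supplies via Fubini but which weak $L^2$ convergence would not, in agreement with the limitation flagged in the paper.
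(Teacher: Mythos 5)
Your proposal follows essentially the same route as the paper: extend to a larger domain $\widetilde\Omega$ on which $\dod$ becomes interior, define sliced functionals built on $p=\mathrm Eu-e\,\mathcal L^n$ (the paper's $\mathcal G_\xi$, your $\tilde{\mathcal F}_\xi$), use the strong $L^2$ convergence of $e_k$ together with Fubini to obtain slicewise $L^1$ convergence of $(e_k\xi\cdot\xi)^\xi_y$ and hence weak$^*$ convergence of the sliced plastic measures, apply Reshetnyak's theorem on slices with the $\varepsilon$-perturbation trick from Proposition~\ref{2608161903}, and finish with the H\"older/Schatten-norm localisation via Lemma~\ref{2608161816}. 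The argument is correct and matches the paper's proof in structure and in each key step.
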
 
\begin{proof}
Let $\widetilde{\Omega}$ be a smooth open set such that $\Omega \cup \dod\subset \widetilde{\Omega}$ and $\dom \cap \widetilde{\Omega}=\dod$, and let us define, for every $(u_k,e_k,p_k)$, $(u,e,p)$ as in the assumptions of the theorem, the extended functions
\begin{equation*} 
\ol u_k:=
\begin{cases}
u_k\,\,&\text{in }\Omega \, ,\\
w\,\,&\text{in }\widetilde{\Omega} \setminus\Omega \, ,
\end{cases}
\quad \ol{e}_k:=
\begin{cases}
e_k\,\,&\text{in }\Omega \, ,\\
\mathrm{E}w\,\,&\text{in }\widetilde{\Omega} \setminus\Omega \, ,
\end{cases}
\quad \ol{p}_k:=
\begin{cases}
p_k\,\,&\text{in }\ol \Omega \, ,\\
0\,\,&\text{in }\widetilde{\Omega} \setminus\ol \Omega \, ,
\end{cases}
\end{equation*}
and
\begin{equation*}
    \ol u:=
    \begin{cases}
    u\,\,&\text{in }\Omega \, ,\\
    w\,\,&\text{in }\widetilde{\Omega} \setminus\Omega \, ,
    \end{cases}
    \quad \ol{e}:=
    \begin{cases}
    e\,\,&\text{in }\Omega \, ,\\
    \mathrm{E}w\,\,&\text{in }\widetilde{\Omega} \setminus\Omega \, ,
    \end{cases}
    \quad \ol{p}:=
    \begin{cases}
    p\,\,&\text{in }\ol \Omega \, ,\\
    0\,\,&\text{in }\widetilde{\Omega} \setminus\ol \Omega \, .
    \end{cases}
\end{equation*}
Moreover, given $\alpha_k$, $\alpha$ as in the statement, we let $\ol \alpha_k$ and $\ol \alpha$ be $W^{1,\gamma}$ extenstions of $\alpha_k$ and $\alpha$ to $\widetilde{\Omega}$, respectively. Then $\mathrm{E}\ol u_k=\ol e_k+ \ol p_k$ and $\mathrm{E}\ol u=\ol e+ \ol p$  as measures in $\M_b(\widetilde{\Omega};\MD)$, $\ol u_k\wtos \ol u$ in $BD(\widetilde{\Omega})$, $\ol e_k\to \ol e$ strongly in $L^2(\widetilde{\Omega};\Mnn)$, and
\begin{equation*}
\H(\alpha_k,p_k)= \int\limits_{\widetilde{\Omega}} V(\ol \alpha_k) \d |\ol p_k|\,, \quad  \H(\alpha,p)= \int\limits_{\widetilde{\Omega}} V(\ol \alpha) \d |\ol p| \, . 
\end{equation*}
(Notice 
that the formula above makes sense for the precise representatives of $\ol \alpha_k$ and $\ol \alpha$, but we did not write it explicitely not to overburden the notation.) 
 With a slight abuse of notation, in what follows we drop the notation $(\ol u_k, \ol e_k, \ol p_k)$, $(\ol u, \ol e, \ol p)$, $\ol \alpha_k$, $\ol \alpha$ for the extended functions and we consider the triples $(u_k,e_k,p_k)$, $(u,e,p)$ and the functions $\alpha_k$, $\alpha$ as already extended to~$\tilde{ \Omega}$ as described above. Moreover, we adapt the definition of admissible triples accordingly by putting
\begin{equation*}
    \begin{split}
        \mathcal{A}(w):=\{(u,e,p) \in  BD(\tilde \Omega)&  \x L^2(\tilde \Omega;\Mnn) \x \M_b(\tilde \Omega;\MnnD): \\ & \mathrm{E}u=e + p \,\text{ in }\tilde \Omega\, ,\, u = w \text{ in } \tilde \Omega \sm \ol \Omega \, , \, e = \EE w \text{ in } \tilde \Omega \sm \ol \Omega\}\, .
    \end{split}
\end{equation*}

%
We now show how to adapt the technique used in the proof of Proposition~\ref{2608161903} to the present setting, omitting some details when they are completely analogous to those in the proof of Proposition~\ref{2608161903}. Let us define,  for every direction $\xi\in \mathbb{S}^{n-1}$, every $\alpha\in   W^{1,\gamma}(\Omega)$,   every~$p$ such that $(u,e,p)\in \mathcal{A}(w)$, and every $A\subset \Omega$ open, 
 \begin{equation}\label{0109161819}
    \begin{split}
        \mathcal{G}_\xi(\alpha,p;A)&:=\int\limits_A V(\widetilde{\alpha}) \d|p\,\xi\cdot\xi| =\int\limits_A V(\widetilde{\alpha}) \d|(\mathrm{E}u - e)\,\xi\cdot\xi| \\
        & = \int \limits_{\Pi^\xi}  \int\limits_{A^\xi_y} V(\widetilde{\alpha}^\xi_y(t)) \d|\mathrm{D}\widehat{u}^\xi_y - (e\, \xi \cdot \xi)^\xi_y|(t)  \d \H^{n-1}(y) \\
        & = \int \limits_{\Pi^\xi}  \int\limits_{A^\xi_y} V(\widetilde{\alpha}^\xi_y(t)) \d|(p \, \xi \cdot \xi)^\xi_y|(t)  \d \H^{n-1}(y)\,.
    \end{split}
 \end{equation}
The functionals $\mathcal{G}_\xi$ will play the role of the functionals $\mathcal{F}_\xi$ defined in~\eqref{2608162103}. More precisely, we claim that for every $\alpha_k$, $\alpha \in   W^{1,\gamma}(\tilde \Omega)$,   $(u_k,e_k,p_k)$, $(u,e,p) \in \mathcal{A}(w)$ such that $\alpha_k \wto \alpha$ in $W^{1,\gamma}(\tilde \Omega)$, $u_k \wtos u$ in $BD(\tilde \Omega)$, and $e_k \to e$ strongly in $L^2(\tilde{\Omega};\Mnn)$ the following inequality holds true
\begin{equation} \label{eq:claim on Gxi}
    \mathcal{G}_\xi(\alpha,p;A) \leq \liminf_{k\to \infty} \mathcal{G}_\xi(\alpha_k,p_k;A) \, .
\end{equation}
To prove this, we start by extracting a (not relabeled) subsequence such that 
\begin{equation} \label{eq:convergences of slices}
    (\tilde \alpha_k)^\xi_y \to \tilde \alpha^\xi_y \, , \quad (\widehat u_k)^\xi_y \to \widehat u^\xi_y \, , \quad (e_k \, \xi \cdot \xi)^\xi_y \to (e \, \xi \cdot \xi)^\xi_y  \quad \text{ in } L^1(\tilde \Omega^\xi_y) \, .
\end{equation}
Let us fix $\epsilon > 0$. Since 
\begin{equation*}
    \liminf_{k \to \infty} \big[ \mathcal{G}_\xi(\alpha_k,p_k;A) + \epsilon |\EE u_k|(A) + \epsilon \| \nabla \alpha \|_{L^\gamma(A)} \big] < +\infty  \, ,
\end{equation*}
by Fatou's Lemma as in~\eqref{eq:liminf of section finite} we deduce that   for $\H^{n-1}$-a.e.\ $y \in \Pi^\xi$  
\begin{equation*}
    \liminf_{k \to \infty} \bigg[ \int\limits_{A^\xi_y} V((\widetilde{\alpha}_k)^\xi_y) \d|(p_k\, \xi \cdot \xi)^\xi_y|    + \epsilon |\mathrm{D} (\widehat{u}_k)^\xi_y|(A^\xi_y) + \epsilon \int \limits_{A^\xi_y} |\nabla (\tilde \alpha_k)^\xi_y |^\gamma \d t \bigg] < +\infty \, .
\end{equation*}
As in~\eqref{eq:liminf of section is limit}, we extract a subsequence $k_j$ (possibly depending on $u$) such that the liminf above is actually a limit. On this subsequence we deduce that 
\begin{equation*}
    (\tilde{\alpha}_{k_j})^\xi_y \to \tilde \alpha^\xi_y \quad \text{uniformly in } \tilde \Omega^\xi_y \, , \quad (\widehat u_{k_j})^\xi_y \wtos \widehat u^\xi_y \quad \text{ in } BV(\tilde{\Omega}^\xi_y) \, .
\end{equation*}
In particular, from~\eqref{eq:convergences of slices} we obtain that 
\begin{equation*}
    (p_k\, \xi \cdot \xi)^\xi_y = \mathrm{D}(\widehat{u}_{k_j})^\xi_y - (e_{k_j}\, \xi \cdot \xi)^\xi_y \wtos \mathrm{D}\widehat{u}^\xi_y - (e\, \xi \cdot \xi)^\xi_y    = (p\, \xi \cdot \xi)^\xi_y  \text{ in } \M_b(\tilde{\Omega}^\xi_y)  \, .
\end{equation*}
We stress that the strong convergence of $e_k$ to $e$ is crucial to deduce the weak* convergence above. An application of Theorem~\ref{teo:Res} yields 
\begin{equation*}
    \begin{split}
        & \int\limits_{A^\xi_y} V(\widetilde{\alpha}^\xi_y) \d|(p\, \xi \cdot \xi)^\xi_y|   \\
        & \leq \liminf_{j \to \infty} \int\limits_{A^\xi_y} V((\widetilde{\alpha}_{k_j})^\xi_y) \d|(p_{k_j}\, \xi \cdot \xi)^\xi_y|   \\
        & \leq \lim_{j \to \infty} \bigg[ \int\limits_{A^\xi_y} V((\widetilde{\alpha}_{k_j})^\xi_y) \d|(p_{k_j}\, \xi \cdot \xi)^\xi_y|  + \epsilon |\mathrm{D} (\widehat{u}_{k_j})^\xi_y|(A^\xi_y) + \epsilon \int \limits_{A^\xi_y} |\nabla (\tilde \alpha_{k_j})^\xi_y |^\gamma \d t \bigg]
    \end{split}
    \end{equation*}
Integrating with respect to $y \in \Pi^\xi$ and letting $\epsilon \to 0$ we conclude the proof of~\eqref{eq:claim on Gxi}.

With~\eqref{eq:claim on Gxi} at hand, the proof of the theorem follows the lines of the localisation argument already presented in the proof of Theorem~\ref{thm:lsc with Eu} with minor adaptations. Now, instead of~\eqref{2908162105}, we put 
\begin{equation*}
    \varphi_j^i(x):=V(\widetilde{\alpha}(x))\bigg|\frac{\mathrm{d} p}{\mathrm{d}  |p|}(x)\,\xi^i_j\cdot \xi^i_j\bigg|\quad\text{for every }j\in\N,\,i=1,\dots,n,\,\text{and }x\in\Omega\,,
\end{equation*}
and we use the fact that $\bigl|\frac{\mathrm{d}p}{\mathrm{d}|p|}(x)\bigr|=1$ for  $|p|$-a.e. $x \in \Omega$, instead of \eqref{2908162106}.

\end{proof}

\begin{remark}\label{rem:2504190838}
In order to prove the existence of a globally stable quasi-static evolution for a model of perfect plasticity and gradient damage with a term $\|\nabla \alpha\|_{L^\gamma}^\gamma$, $\gamma>1$ in the energy,
it would be enough to prove the lower semicontinuity of $\H$ when $u_k\wtos u$ in $BD(\Omega)$ and $e_k\wto e$ in $\Lnn$ (only weakly). The main difficulty in this case is that it is not true that for every $\xi \in \mathbb{S}^{n-1}$ there exists a subsequence $e_{k_j}$ such that \eqref{eq:convergences of slices} holds true.

Therefore a possible strategy for the existence proof would be to find an a priori bound on $e_k$ that guarantees the strong convergence in $\Lnn$. 
Since the elasticity tensor $\C(\alpha)$ is equicoercive with respect to $\alpha\in [0,1]$, 
the strong convergence for $e_k$ would follow for instance by an uniform bound for the stresses $\sigma_k=\C(\alpha_k)e_k$ in $H^1_{\mathrm{loc}}(\Omega;\Mnn)$. 
In the framework of perfect plasticity, without damage, an a priori bound of this type for the stresses is proven in \cite{BenFre96} and \cite{Dem09}.
\end{remark}

\begin{remark}
We remark that we have considered only measures $p$ with values in $\MD$, since this is the form used in perfect plasticity. Nonetheless it is possible to prove Theorem~\ref{thm:main} also for $p$ valued in $\Mnn$,  with no modifications in the argument. 
\end{remark}


\bigskip
\noindent {\bf Acknowledgements.}
V.C.\ has been supported by the Marie Sk\l odowska-Curie Standard European Fellowship No.\ 793018. G.O.\ has been supported by the Alexander von Humboldt Foundation.

\bigskip
\bibliography{bibliography}
\bibliographystyle{siam}

\end{document}